\newtheorem{theorem}{Theorem}[section]
\newtheorem{lemma}[theorem]{Lemma}
\newtheorem{proposition}[theorem]{Proposition}
\newtheorem{cor}[theorem]{Corollary}
\theoremstyle{definition}
\theoremstyle{remark}
\newtheorem{remark}[theorem]{\bf{Remark}}
\numberwithin{equation}{section}
\begin{document}

\title [Generalized Euclidean operator radius inequalities $2$-tuple operators] { Refinements of generalized Euclidean operator radius inequalities of $2$-tuple operators }

\author[S. Jana, P. Bhunia, K. Paul] {Suvendu Jana, Pintu Bhunia, Kallol Paul$^*$}

\address{(Jana) Department of Mathematics, Mahishadal Girl's College, Purba Medinipur 721628, West Bengal, India}
\email{janasuva8@gmail.com}
 
\address{(Bhunia) Department of Mathematics, Jadavpur University, Kolkata 700032, West Bengal, India}
\email{pintubhunia5206@gmail.com}
\address{(Paul) Department of Mathematics, Jadavpur University, Kolkata 700032, West Bengal, India}
\email{kalloldada@gmail.com}

\thanks{* \textit{Corresponding author}\\
	\indent Pintu Bhunia would like to thank UGC, Govt. of India for the financial support in the form of Senior Research Fellowship under the mentorship of Prof. Kallol Paul. }

\renewcommand{\subjclassname}{\textup{2020} Mathematics Subject Classification}\subjclass[]{ 15A60, 47A30, 47A50, 47A12}
\keywords{$A$-Euclidean operator radius, $A$-numerical radius, $A$-operator seminorm}

\maketitle

\begin{abstract}
 We develop several upper and lower bounds for the $A$-Euclidean operator radius of $2$-tuple operators admitting $A$-adjoint, and show that they refine the earlier related bounds. As an application of the bounds developed here, we obtain  sharper $A$-numerical radius bounds.
\end{abstract}

\section{Introduction}

\noindent Let $\mathscr{H}$ be a complex Hilbert space with inner product $\langle \cdot,\cdot \rangle $ and let $\|\cdot\|$ be the norm induced by the inner product. Let $ \mathbb{B}(\mathscr{H})$ denote the $C^*$-algebra of all bounded linear operators on $\mathscr{H}.$  For $A\in \mathbb{B}(\mathscr{H}),$ $A^*$ denotes the adjoint of $A$, and  $|A|=({A^*A})^{\frac{1}{2}}$.
Also, $\mathcal{R}(A)$ and $\mathcal{N}(A)$ denote the range and the kernel of $A$, respectively.  Every positive operator $A$ in $\mathbb{B}(\mathscr{H})$ defines the following positive semi-definite sesquilinear form: 
$$\langle . , . \rangle_A: \mathscr{H}\times \mathscr{H}\rightarrow \mathbb{C},\hspace{0.4cm} (x,y)\rightarrow \langle x, y\rangle_A=\langle Ax, y\rangle.$$ 
Seminorm $\|\cdot\|_A$ induced by the semi-inner product $\langle . , . \rangle_A$, is given by $\|x\|_A=\langle Ax, x\rangle^{1/2}=\|A^{1/2}x\| .$ This makes $\mathscr{H}$ into  a semi-Hilbertian space. It is easy to verify that the seminorm induces a norm if and only if $A$ is injective. Also, $(\mathscr{H},\| \cdot \|_A)$ is complete if and only if  the range space of operator $A$,  denoted by $\mathcal{R}(A),$ is closed subspace of $\mathscr{H}.$ Henceforth, we reserve  the symbol $ A $  for a non-zero positive operator in $ \mathbb{B}(\mathscr{H})$.   We denote  the $A$-unit sphere and $A$-unit ball of the semi-Hilbertian space $(\mathscr{H}, \|\cdot\|_A)$ by $\mathbb{S}_{{\| \cdot \|}_A}$ and $\mathbb{B}_{{\| \cdot \|}_A}$,  respectively, i.e., $$ \mathbb{S}_{{\| \cdot \|}_A}=\left\lbrace x\in\mathscr{H}: \|x\|_A=1\right\rbrace,  \,\, \mathbb{B}_{{\| \cdot \|}_A}=\left\lbrace x\in\mathscr{H}: \|x\|_A \leq 1\right\rbrace      .$$  
 For $T\in \mathbb{B}(\mathscr{H})$, let $ c_A(T) $ and $w_A(T)$ denote the $A$-Crawford number and the $A$-numerical radius of $T$, respectively and are defined as  $$c_A(T)=\inf \left \{|\langle Tx,x \rangle_A|: x\in \mathbb{S}_{{\| \cdot \|}_A} \right \}, \, \, w_A(T)=\sup \left \{|\langle Tx,x \rangle_A|: x\in \mathbb{S}_{{\| \cdot \|}_A} \right \}.$$ 
 Note that  $w_A(T)$ is not necessarily finite, see \cite{AOT20}. 
  An operator $ S\in\mathbb{B}(\mathscr{H})$ is called an $A$-adjoint of $ T\in\mathbb{B}(\mathscr{H})$ if for every $ x,y\in \mathscr{H}$, $ \langle Tx, y \rangle_A=\langle x, Sy\rangle_A $ holds, i.e.,  $S$ is a solution of the operator equation $ AX=T^*A$.
 There are operators $T$ for which $A$-adjoint may fail to exist, when it do exist then there may be more than one $A$-adjoint. The set of all operators in $\mathbb{B}(\mathscr{H})$ which possess $A$-adjoint is denoted by $\mathbb{B}_A(\mathscr{H}).$ By Douglas theorem \cite{D1}, we have \begin{eqnarray*}
\mathbb{B}_A(\mathscr{H})&=&\left\lbrace T\in\mathbb{B}(\mathscr{H}):\hspace{0.1cm} \mathcal{R}(T^*A)\subseteq \mathcal{R}(A)\right\rbrace \\&=& \left\lbrace  T\in\mathbb{B}(\mathscr{H}):\hspace{0.1cm} \exists  \,\,\lambda > 0\textit{ such that}\hspace{0.2cm}  \|ATx\|\leq \lambda \|Ax\|,\hspace{0.1cm} \forall x\in\mathscr{H}\right\rbrace.
\end{eqnarray*}
If $T\in\mathbb{B}_A(\mathscr{H})$, then there exists a unique solution of $ AX=T^*A$, is denoted by $T^{\sharp_A}$, satisfying $\mathcal{R}(T^{\sharp_A})\subseteq \overline{\mathcal{R}(A)}$, where $\overline{\mathcal{R}(A)}$ is the norm closure of $\mathcal{R}(A)$. 
%Moreover,  $ T^{\sharp_A}= A^\dag T^* A $, where $A^\dag$ is the Moore-Penrose inverse of $T$. 
For simplicity we will write $ T^\sharp$ instead of  $T^{\sharp_A} .$
   If $ T\in\mathbb{B}_A(\mathscr{H}),$ then $ T^\sharp  \in\mathbb{B}_A(\mathscr{H}). $  Moreover, $ \left[T^\sharp\right]^\sharp=P_{\overline{\mathcal{R}(A)}}TP_{\overline{\mathcal{R}(A)}}$ and $ \left[\left[T^\sharp\right]^\sharp\right]^\sharp=T^\sharp $, where $ P_{\overline{\mathcal{R}(A)}}$ denotes the orthogonal projection onto $\overline{\mathcal{R}(A)}$.
   For more about $ T^\sharp$, the reader can see \cite{Ar1,Ar2}. 
  Again, clearly we have 
    \begin{eqnarray*}
    	\mathbb{B}_{A^{1/2}}(\mathscr{H})&=&\left\lbrace T\in\mathbb{B}(\mathscr{H}):\hspace{0.1cm} \mathcal{R}(T^*A^{1/2})\subseteq \mathcal{R}(A^{1/2})\right\rbrace \\&=& \left\lbrace  T\in\mathbb{B}(\mathscr{ H}):\hspace{0.1cm} \exists\,\,  \lambda > 0\textit{ such that}\hspace{0.2cm}  \|Tx\|_A\leq \lambda\|x\|_A,\hspace{0.1cm} \forall x\in\mathscr{H}\right\rbrace.
    \end{eqnarray*}
   An operator in $\mathbb{B}_{A^{1/2}}(\mathscr{H})$  is called $A$-bounded operator. The inclusion $\mathbb{B}_{A}(\mathscr{H}) \subseteq \mathbb{B}_{A^{1/2}}(\mathscr{H}) $ always holds. Both of them  are subalgebras of $\mathbb{B}(\mathscr{H})$ which are neither closed and nor dense in $\mathbb{B}(\mathscr{H}).$ 
   The semi-inner product $\langle  . , . \rangle_A$ induces the $A$-operator seminorm on  $\mathbb{B}_{A^{1/2}}(\mathscr{H})$ defined as follows: 
   \begin{eqnarray*}
   \|T\|_A&=&\sup_{\underset{x\neq 0}{x\in\overline{\mathcal{R}(A)}}}\frac{\|Tx\|_A}{\|x\|_A}=\sup\left\lbrace \|Tx\|_A:\hspace{0.1cm}  x\in \mathbb{S}_{{\| \cdot \|}_A} \right\rbrace< \infty.
\end{eqnarray*}
Also, it is easy to verify that
    \begin{eqnarray*}
    	\|T\|_A&=&\sup\left\lbrace | \langle Tx,y\rangle_A|:\hspace{0.1cm}  x,y \in \mathbb{S}_{{\| \cdot \|}_A}  \right\rbrace.
    \end{eqnarray*}
By Cauchy-Schwarz inequality, it follows that $| \langle Tx,x\rangle_A| \leq \|Tx\|_A \|x\|_A$ for all $x\in \mathscr{ H},$ and so $w_A(T)\leq \|T\|_A$ for all $ T\in \mathbb{B}_{A^{1/2}}(\mathscr{H}) $. For  $A$-selfadjoint operator $T$ (i.e., $AT=T^*A$),  we have $w_A(T)=\|T\|_A,$ see in \cite{Z}. An operator $T\in \mathbb{B}_{A}(\mathscr{H}) $ can be expressed as $T=\Re_A(T)+i \Im_A(T)$, where $\Re_A(T)=\frac12(T+T^{\sharp_A})$ and $\Im_A(T)=\frac1{2i}(T-T^{\sharp_A}).$  This decomposition is called $A$-Cartesian decomposition, using this we have $|\langle \Re_A(T)x,x\rangle_A|^2+ |\langle \Im_A(T)x,x\rangle_A|^2=|\langle Tx,x\rangle_A|^2$ for all $x\in \mathscr{H}.$ This implies $\|\Re_A(T)\|_A \leq w_A(T)$ and $\|\Im_A(T)\|_A \leq w_A(T)$, since $\Re_A(T)$ and $\Im_A(T)$ both are $A$-selfadjoint.  Therefore, $\|T\|_A\leq  \|\Re_A(T)+i \Im_A(T)\|_A \leq 2w_A(T).$ Thus, for every $T\in \mathbb{B}_{A}(\mathscr{H}),$ we get $w_A(T)\leq \|T\|_A\leq 2w_A(T)$. 
One can also easily verify that the above inequality holds for every  $T \in \mathbb{B} _{A^{1/2}}(\mathscr{H}),$ and $w_A(T^n)\leq [w_A(T)]^n$ holds for every positive integer $n$, see \cite{BFM}.
 
Following \cite{Pop}, the $A$-Euclidean operator radius of $d$-tuple operators $\mathbf{T}=(T_1,T_2,.....,T_d)\in{\mathbb{B}_{A^{1/2}}(\mathscr{H})}^d $ is defined as
 $$ w_{A,e}(\mathbf{T})=\sup\left\lbrace \left(\sum_{k=1}^{d} |\langle T_kx,x\rangle_A|^2\right)^{1/2} : x\in \mathbb{S}_{{\| \cdot \|}_A}  \right\rbrace.$$  
 This is also known as $A$-joint numerical radius of $\mathbf{T}$.
 The $A$-Euclidean operator seminorm of $d$-tuple operators $\mathbf{T}=(T_1,T_2,.....,T_d)\in{\mathbb{B}_{A^{1/2}}(\mathscr{H})}^d $ is defined as  $$\|\mathbf{T}\|_A=\sup\left\lbrace \left(\sum_{k=1}^{d} \| T_kx\|_A^2\right)^{1/2} : x\in \mathbb{S}_{{\| \cdot \|}_A} \right\rbrace.$$
 Clearly, the $A$-Euclidean operator radius and $A$-Euclidean operator seminorm of $d$-tuple operators are generalizations of $A$-numerical radius and $A$-operator seminorm of an operator in $\mathbb{B}_{A^{1/2}}(\mathscr{H})$. Observe that for $A=I,$ $\|\cdot \|_A=\|\cdot\|,$ $w_A(\cdot)=w(\cdot)$, $c_A(\cdot)=c(\cdot)$, $w_{A,e}(\cdot)= w_e(\cdot)$ and $\| \cdot \|_{A,e}=\|\cdot \|_e$ are the usual operator norm, numerical radius, Crawford number, Euclidean operator radius and Euclidean operator norm, respectively. 
 For recent developments of $A$-numerical radius inequalities  see \cite{pkk,BNP} and  
for Euclidean operator radius inequalities see \cite{D,MSS,SRS}. 
 In this paper,  we obtain several inequalities involving $A$-Euclidean operator radius and $A$-Euclidean operator seminorm of $2$-tuple operators, and we show that these inequalities improve on the earlier related inequalities.
  
   We end this introductory section with a brief description of the space $\mathbf{R}(A^{1/2})$ ( see \cite{Ar}) as follows: 
  The semi-inner product $\langle . , . \rangle_A$  induces an inner product on the quotient space $\mathscr{H}/\mathcal{N}(A) ,$ defined by $[\overline{x},\overline{y}]=\langle Ax, y\rangle,\,\,\, \forall \,\,\overline{x},\overline{y}\in\mathscr{H}/\mathcal{N}(A) .$ The space $( \mathscr{H}/\mathcal{N}(A), [. , .]) $  is, in general, not a complete space. 
 The completion of $( \mathscr{H}/\mathcal{N}(A), [. , .]) $ is isometrically isomorphic to the Hilbert space ${R}(A^{1/2})$ via the canonical construction mentioned in \cite{BR}, where  ${R}(A^{1/2})$ is equipped  with the inner product $$( A^{1/2} x, A^{1/2} y )=\langle P_{\overline{\mathcal{R}(A)}}x,P_ {\overline{\mathcal{R}(A)} }y\rangle,\,\,\, \forall x,y\in\mathscr{H}.$$ In the sequel, the Hilbert space $( \mathcal{R}(A^{1/2}), (. , . ))$ will be denoted by $\mathbf{R}(A^{1/2})$ and we use the symbol $ \| \cdot \|_{\mathbf{R}(A^{1/2})}$ to represent the norm induced by the inner product $ ( . , . )$.
 % To know more about the Hilbert space $\mathbf{R}(A^{1/2})$, the reader can see \cite{Ar}.
  Note that, the fact $\mathcal{R}(A)\subseteq \mathcal{R}(A^{1/2})$ implies that $ ( Ax, Ay ) = \langle x, y\rangle_A, \hspace{0.2cm}\forall x,y\in\mathscr{H}.$ This gives $ \|Ax\|_{\mathcal{R}(A^{1/2})}=\|x\|_A ,\hspace{0.2cm} \forall x\in\mathscr{H}.$ Now, we give a nice connection of an operator $T\in \mathbb{B}_{A^{1/2}}(\mathscr{H})$ with an operator $\widetilde{T}\in\mathbb{B}(\mathbf{R}(A^{1/2})),$ in the form of the following proposition, see \cite{Ar}. 
  
   \begin{proposition}  \label{p10} Let $ T\in\mathbb{B}(\mathscr{H})$. Then $ T\in\mathbb{B}_{A^{1/2}}(\mathscr{H})$ if and only if there exist a unique $\widetilde{T}\in\mathbb{B}(\mathbf{R}(A^{1/2}))$ such that $ Z_AT=\widetilde{T}Z_A,$ where $ Z_A:\mathscr{H}\rightarrow \mathbf{R}(A^{1/2})$ is defined by $Z_Ax=Ax $.
\end{proposition}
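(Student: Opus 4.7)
The plan is to prove both directions separately, starting with the easier backward implication, and to exploit the key isometric identity $\|Ax\|_{\mathbf{R}(A^{1/2})}=\|x\|_A$ throughout.

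For the backward direction, I would suppose that such a $\widetilde{T}\in\mathbb{B}(\mathbf{R}(A^{1/2}))$ exists satisfying $Z_AT=\widetilde{T}Z_A$, which means $ATx=\widetilde{T}(Ax)$ for all $x\in\mathscr{H}$. Then taking $\mathbf{R}(A^{1/2})$-norms and using $\|Au\|_{\mathbf{R}(A^{1/2})}=\|u\|_A$ gives $\|Tx\|_A=\|\widetilde{T}(Ax)\|_{\mathbf{R}(A^{1/2})}\le\|\widetilde{T}\|\,\|Ax\|_{\mathbf{R}(A^{1/2})}=\|\widetilde{T}\|\,\|x\|_A$, so that $T$ satisfies the Douglas-type characterization displayed in the excerpt for $\mathbb{B}_{A^{1/2}}(\mathscr{H})$ with $\lambda=\|\widetilde{T}\|$.

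For the forward direction, assuming $T\in\mathbb{B}_{A^{1/2}}(\mathscr{H})$, I would build $\widetilde{T}$ first on the dense subspace $\mathcal{R}(A)$ of $\mathbf{R}(A^{1/2})$ by the prescription $\widetilde{T}(Ax):=ATx$, and then extend by continuity. The three points to verify in order are: (i) well-definedness, namely that $Ax=Ay$ in $\mathbf{R}(A^{1/2})$ forces $ATx=ATy$ there, which follows because $\|x-y\|_A=0$ combined with $\|T(x-y)\|_A\le\lambda\|x-y\|_A=0$ gives $A^{1/2}T(x-y)=0$ and hence $AT(x-y)=0$; (ii) boundedness with respect to $\|\cdot\|_{\mathbf{R}(A^{1/2})}$, computed via the same identity: $\|\widetilde{T}(Ax)\|_{\mathbf{R}(A^{1/2})}=\|Tx\|_A\le\lambda\|x\|_A=\lambda\|Ax\|_{\mathbf{R}(A^{1/2})}$; and (iii) density of $\mathcal{R}(A)$ in $\mathbf{R}(A^{1/2})$, which is the content of the canonical construction recalled just before the proposition, since $Z_A$ factors through the natural embedding of the quotient $\mathscr{H}/\mathcal{N}(A)$ into its completion $\mathbf{R}(A^{1/2})$. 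The bounded linear extension is unique, yielding $\widetilde{T}\in\mathbb{B}(\mathbf{R}(A^{1/2}))$, and the identity $\widetilde{T}Z_Ax=\widetilde{T}(Ax)=ATx=Z_ATx$ holds on all of $\mathscr{H}$ by construction.

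Uniqueness of $\widetilde{T}$ is then immediate: any two candidates agree on $\mathcal{R}(A)=Z_A(\mathscr{H})$, which is dense in $\mathbf{R}(A^{1/2})$, and both are continuous, so they agree everywhere. The main technical subtlety — and the only step that is not a pure bookkeeping exercise — is step (iii), establishing that $\mathcal{R}(A)$ really is dense in $\mathbf{R}(A^{1/2})$; rather than reprove this from scratch, I would appeal directly to the isometric isomorphism between $\mathbf{R}(A^{1/2})$ and the completion of $(\mathscr{H}/\mathcal{N}(A),[\cdot,\cdot])$ that has just been recalled from \cite{Ar,BR}, under which $Z_A$ corresponds to the composition of the canonical quotient map with the dense inclusion into the completion.
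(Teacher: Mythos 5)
Your proof is correct. Note that the paper does not actually prove this proposition --- it is quoted from \cite{Ar} without proof --- and your argument is the standard one from that reference: the backward direction via the isometry $\|Ax\|_{\mathbf{R}(A^{1/2})}=\|x\|_A$ and the Douglas-type characterization of $\mathbb{B}_{A^{1/2}}(\mathscr{H})$, and the forward direction by defining $\widetilde{T}$ on $\mathcal{R}(A)$, checking well-definedness and boundedness, and extending by continuity using the density of $\mathcal{R}(A)$ in $\mathbf{R}(A^{1/2})$ (a fact the paper itself invokes later, in the proof of Theorem \ref{lemm1}). All three verification steps, and the uniqueness argument via density, are sound.
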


\section{Main Results}

We begin with  the following sequence of known lemmas. First lemma is known as mixed Schwarz inequality.
 
  \begin{lemma}\cite{a1}
 If $T\in\mathbb{B}(\mathscr{H})$ and $ 0\leq\alpha\leq1$, then $$ |\langle Tx,y\rangle|^2\leq\langle|T|^{2\alpha} x,x\rangle\langle|T^*|^{2(1-\alpha)}y,y\rangle\,\, \forall\,\, x,y\in\mathscr{H}.$$ 
 \label{lem1}\end{lemma}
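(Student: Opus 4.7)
The plan is to prove the mixed Schwarz inequality via the polar decomposition of $T$ combined with the ordinary Cauchy--Schwarz inequality, together with the intertwining relation between $|T|$ and $|T^*|$ supplied by the functional calculus. The idea is to factor $|T|$ as $|T|^{\alpha}\cdot|T|^{1-\alpha}$ inside $\langle Tx,y\rangle$ and then move one factor to the $y$-side so that Cauchy--Schwarz produces exactly the two quadratic forms on the right-hand side.

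First I would write the polar decomposition $T = U|T|$, where $U$ is the partial isometry with $\ker U = \ker T$, $\ker U^* = \ker T^*$, and $U^*U$ is the projection onto $\overline{\mathcal{R}(|T|)}$. Next, using the continuous functional calculus, I would record the intertwining identity $Uf(|T|) = f(|T^*|)U$, valid for every continuous function $f$ with $f(0)=0$; applied to $f(t)=t^{1-\alpha}$ (for $\alpha<1$) this yields $U|T|^{1-\alpha} = |T^*|^{1-\alpha}U$, and taking adjoints gives $|T|^{1-\alpha}U^* = U^*|T^*|^{1-\alpha}$. Then I would compute
\begin{equation*}
\langle Tx,y\rangle = \langle U|T|^{\alpha}|T|^{1-\alpha}x,y\rangle = \langle |T|^{\alpha}x,\,|T|^{1-\alpha}U^*y\rangle
\end{equation*}
and apply the usual Cauchy--Schwarz inequality in $\mathscr{H}$ to obtain
\begin{equation*}
|\langle Tx,y\rangle|^2 \le \bigl\||T|^{\alpha}x\bigr\|^2\,\bigl\||T|^{1-\alpha}U^*y\bigr\|^2 = \langle |T|^{2\alpha}x,x\rangle\,\langle U|T|^{2(1-\alpha)}U^*y,y\rangle.
\end{equation*}
Finally, I would reduce the second factor using the intertwining identity: since $U|T|^{1-\alpha} = |T^*|^{1-\alpha}U$ and $UU^*$ is the projection onto $\overline{\mathcal{R}(T)}$, which fixes every vector in $\mathcal{R}(|T^*|)$, one gets $U|T|^{2(1-\alpha)}U^* = |T^*|^{1-\alpha}(UU^*)|T^*|^{1-\alpha} = |T^*|^{2(1-\alpha)}$, giving exactly the claimed bound.

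The main obstacle, though mild, is handling the endpoints $\alpha=0$ and $\alpha=1$ cleanly, where the function $f(t)=t^{1-\alpha}$ either fails to vanish at $0$ or becomes constant, so the intertwining identity needs to be checked separately (both endpoints reduce to the ordinary Cauchy--Schwarz inequality after noting $\langle Tx,y\rangle = \langle |T|x, U^*y\rangle = \langle x, |T^*|\,Uy\rangle$ via $T = |T^*|U$). A secondary technical point is justifying $U|T|^{2(1-\alpha)}U^* = |T^*|^{2(1-\alpha)}$ rigorously on all of $\mathscr{H}$ rather than only on $\overline{\mathcal{R}(|T^*|)}$, which is where the remark about $UU^*$ acting as the identity on $\mathcal{R}(|T^*|)$ is essential.
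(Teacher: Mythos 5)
Your proof is correct. The paper offers no proof of this lemma --- it is simply quoted from Kittaneh \cite{a1} --- and your argument via the polar decomposition $T=U|T|$, the intertwining relation $Uf(|T|)=f(|T^*|)U$ for continuous $f$ with $f(0)=0$, and the ordinary Cauchy--Schwarz inequality is exactly the standard proof found in that reference, with the endpoint cases $\alpha\in\{0,1\}$ and the identity $U|T|^{2(1-\alpha)}U^*=|T^*|^{2(1-\alpha)}$ handled correctly.
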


Second lemma is known as Holder-McCarthy inequality.

\begin{lemma}\cite{a2}
	If $T\in\mathcal{B}(\mathscr{H})$ is positive, then the following inequalities hold: For  any $x\in\mathscr{H}$,
	 $$\langle A^rx,x\rangle\geq||x||^{2(1-r)}\langle Ax,x\rangle^r,\,\,\,\,\, \textit{for}\,\,\,  r\geq1$$
	 and 
	 $$\langle A^rx,x\rangle\leq||x||^{2(1-r)}\langle Ax,x\rangle^r,\,\,\,\,\, \textit{for}\,\,\,  0\leq r\leq1. $$ 
\label{a3}\end{lemma}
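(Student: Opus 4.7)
The plan is to prove the Hölder--McCarthy inequality by reducing both sides to scalar integrals against a single positive Borel measure on the spectrum of the operator, and then invoking Jensen's inequality for the power function $t\mapsto t^{r}$, which is convex for $r\geq 1$ and concave for $0\leq r\leq 1$. Throughout I will assume $x\neq 0$, since the case $x=0$ is trivial.

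First I would apply the spectral theorem for the positive (hence self-adjoint) operator, writing it as $\int_{\sigma}\lambda\,dE_\lambda$ with $\sigma\subseteq[0,\infty)$ its spectrum and $\{E_\lambda\}$ its spectral resolution. For any fixed nonzero $x\in\mathscr{H}$ I would then introduce the finite positive Borel measure $\mu_x$ on $\sigma$ defined by $\mu_x(B)=\langle E(B)x,x\rangle$, which has total mass $\mu_x(\sigma)=\|x\|^{2}$. Under the functional calculus one then has
\[
\langle A^{r}x,x\rangle=\int_{\sigma}\lambda^{r}\,d\mu_x(\lambda),\qquad \langle Ax,x\rangle=\int_{\sigma}\lambda\,d\mu_x(\lambda),
\]
so the claim is reduced to a purely measure-theoretic inequality between scalar integrals.

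Next I would pass to the normalized probability measure $d\nu_x=d\mu_x/\|x\|^{2}$ and apply Jensen's inequality with the convex function $\varphi(t)=t^{r}$ on $[0,\infty)$ for $r\geq 1$. This yields
\[
\Bigl(\int_{\sigma}\lambda\,d\nu_x\Bigr)^{r}\leq \int_{\sigma}\lambda^{r}\,d\nu_x,
\]
which after multiplying through by $\|x\|^{2}$ and using the identities above becomes exactly $\|x\|^{2(1-r)}\langle Ax,x\rangle^{r}\leq \langle A^{r}x,x\rangle$. For $0\leq r\leq 1$ the function $\varphi(t)=t^{r}$ is concave on $[0,\infty)$, and the same application of Jensen gives the reverse inequality, producing the second assertion.

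There is no real obstacle in this argument; the only mild technical point is justifying $\mu_x(\sigma)=\|x\|^{2}$ and verifying the spectral calculus identities for $\langle A^r x,x\rangle$, but these are standard consequences of the spectral theorem. An alternative I would briefly consider is a direct proof via the elementary Bernoulli-type inequality $t^{r}\geq r t + (1-r)$ for $r\geq 1,\ t\geq 0$ (and its reverse for $0\leq r\leq 1$), applied in the functional calculus to the operator $\langle Ax,x\rangle^{-1}A$ and then tested against $x/\|x\|$; this avoids explicit measures but is essentially the same Jensen argument in disguise.
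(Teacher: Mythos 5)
Your argument is correct: reducing both sides to integrals against the spectral measure $\mu_x(\cdot)=\langle E(\cdot)x,x\rangle$ and applying Jensen's inequality to $t\mapsto t^{r}$ (convex for $r\geq 1$, concave for $0\leq r\leq 1$) is precisely the classical proof of the H\"older--McCarthy inequality, and your bookkeeping with the normalization $\nu_x=\mu_x/\|x\|^{2}$ produces exactly the stated exponents. The paper itself offers no proof of this lemma --- it is quoted directly from McCarthy's paper \cite{a2} --- so there is nothing to compare against except to note that your route (including the harmless convention needed at $x=0$, and reading the hypothesis ``$T$ positive'' as referring to the operator $A$ appearing in the displayed inequalities) is the standard one.
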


Third lemma is related to $A$-selfadjoint operators.

\begin{lemma}\cite{kf2}
Let $ T\in\mathcal{B}(\mathscr{H})$ be $A$-selfadjoint. Then $T^\sharp$ is also $A$-selfadjoint and $  [T^\sharp]^\sharp=T^\sharp. $
\label{refa1}\end{lemma}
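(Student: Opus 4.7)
The plan is to exploit the uniqueness clause in the definition of the $A$-adjoint: $T^\sharp$ is characterized as the unique operator satisfying $AT^\sharp=T^*A$ together with the range condition $\mathcal{R}(T^\sharp)\subseteq\overline{\mathcal{R}(A)}$. Throughout I will use that $A$ is positive, so in particular $A^*=A$, which lets me freely take adjoints of operator identities involving $A$.

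For the first assertion, that $T^\sharp$ is $A$-selfadjoint, I would combine the defining identity $AT^\sharp=T^*A$ with the hypothesis $AT=T^*A$. Subtracting these gives $A(T-T^\sharp)=0$. Taking the Hilbert-space adjoint and using $A^*=A$ converts this to $(T-T^\sharp)^*A=0$, equivalently $(T^\sharp)^*A=T^*A$. Substituting back into the defining equation yields $AT^\sharp=(T^\sharp)^*A$, which is precisely the condition for $T^\sharp$ to be $A$-selfadjoint.

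For the second claim, $[T^\sharp]^\sharp=T^\sharp$, the first step does almost all the work. By definition, $[T^\sharp]^\sharp$ is the unique operator $X$ obeying $AX=(T^\sharp)^*A$ with $\mathcal{R}(X)\subseteq\overline{\mathcal{R}(A)}$. The identity $AT^\sharp=(T^\sharp)^*A$ just established, together with the range containment $\mathcal{R}(T^\sharp)\subseteq\overline{\mathcal{R}(A)}$ that is built into the definition of $T^\sharp$ at the previous level, exhibits $T^\sharp$ itself as a solution of that equation satisfying both requirements. Uniqueness of the $A$-adjoint then forces $[T^\sharp]^\sharp=T^\sharp$.

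I expect no serious obstacle: the proof is essentially bookkeeping with the operator equation $AX=T^*A$ under adjunction. The only point that needs care is to verify at each step that the range-containment clause $\mathcal{R}(\,\cdot\,)\subseteq\overline{\mathcal{R}(A)}$ is preserved, so that the uniqueness statement in the definition of the $A$-adjoint can be legitimately invoked; this is automatic here because the range condition on $T^\sharp$ is part of its very construction and is not disturbed by the algebraic manipulations above.
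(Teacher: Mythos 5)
Your argument is correct: from $AT=T^*A$ and $AT^\sharp=T^*A$ you get $A(T-T^\sharp)=0$, adjoints give $(T^\sharp)^*A=T^*A=AT^\sharp$, so $T^\sharp$ is $A$-selfadjoint, and the uniqueness clause (solution of $AX=(T^\sharp)^*A$ with $\mathcal{R}(X)\subseteq\overline{\mathcal{R}(A)}$) then forces $[T^\sharp]^\sharp=T^\sharp$ since $T^\sharp$ already satisfies both requirements. The paper only cites this lemma from Feki's note without reproducing a proof, and your argument is exactly the standard one used there; the only point worth making explicit is that $A$-selfadjointness of $T$ guarantees $T\in\mathbb{B}_A(\mathscr{H})$ in the first place (take $X=T$ in $AX=T^*A$ and invoke Douglas' theorem), so that $T^\sharp$, and hence $[T^\sharp]^\sharp$, actually exist.
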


Fourth lemma is related to semi-Hilbertian space operator $T$ and Hilbert space operator $\widetilde{T}$.

\begin{lemma}\label{a2}\cite{Ar, kf}     %$($\cite[Prop. 2.9]{majsecesuci}$)$
	Let $T\in \mathcal{B}_A(\mathcal{H})$. Then\\
	$(i)~\widetilde{T^{\sharp}}=\big(\widetilde{T}\big)^*\;\text{ and }\; \widetilde{({T^{\sharp_A}})^{\sharp_A}}=\widetilde{T}.\\
	(ii)~\|T\|_A=\|\widetilde{T}\|_{\mathcal{B}(\mathbf{R}(A^{1/2}))}, ~ w_A(T)=w(\widetilde{T}) ~ \mbox{and}~~c_A(T)=c(\widetilde{T}).$ 
	
	\noindent (Here $\|\widetilde{T}\|_{\mathcal{B}(\mathbf{R}(A^{1/2}))}$ denotes the usual operator norm of $\widetilde{T}$).
\end{lemma}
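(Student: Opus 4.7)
The whole lemma is really a book-keeping exercise built on Proposition \ref{p10}, together with the two identities $\|Z_Ax\|_{\mathbf{R}(A^{1/2})}=\|x\|_A$ and $(Z_Ax,Z_Ay)=\langle x,y\rangle_A$ recorded just before that proposition. The only non-algebraic ingredient I will need is that $Z_A(\mathscr{H})=\mathcal{R}(A)$ is dense in $\mathbf{R}(A^{1/2})$, which follows from the construction of $\mathbf{R}(A^{1/2})$ as a completion.

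For part (i), the strategy is to check the operator identity $\widetilde{T^{\sharp}}=(\widetilde{T})^{*}$ on the dense subspace $Z_A(\mathscr{H})$. For arbitrary $x,y\in\mathscr{H}$, I would compute, using the intertwining relation $Z_A T=\widetilde{T}Z_A$, the definition of the $A$-adjoint and the identity $(Z_A u,Z_A v)=\langle u,v\rangle_A$,
\begin{align*}
\bigl(\widetilde{T^{\sharp}}Z_Ax,Z_Ay\bigr)
&=(Z_A T^{\sharp}x,Z_Ay)=\langle T^{\sharp}x,y\rangle_A\\
&=\langle x,Ty\rangle_A=(Z_Ax,Z_A Ty)=\bigl(Z_Ax,\widetilde{T}Z_Ay\bigr).
\end{align*}
Since both sides are bounded sesquilinear forms on $\mathbf{R}(A^{1/2})\times\mathbf{R}(A^{1/2})$ that agree on the dense set $Z_A(\mathscr{H})\times Z_A(\mathscr{H})$, the equality extends and $\widetilde{T^{\sharp}}=(\widetilde{T})^{*}$ follows. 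The second identity in (i), $\widetilde{(T^{\sharp_A})^{\sharp_A}}=\widetilde{T}$, then drops out by applying the first twice.

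For part (ii), the plan is to translate each of the three quantities into the corresponding Hilbert-space quantity for $\widetilde{T}$. Using $\|Tx\|_A=\|Z_A Tx\|_{\mathbf{R}(A^{1/2})}=\|\widetilde{T}Z_Ax\|_{\mathbf{R}(A^{1/2})}$ and $\|x\|_A=\|Z_Ax\|_{\mathbf{R}(A^{1/2})}$, the supremum defining $\|T\|_A$ becomes a supremum of $\|\widetilde{T}u\|_{\mathbf{R}(A^{1/2})}/\|u\|_{\mathbf{R}(A^{1/2})}$ over $u\in Z_A(\mathscr{H})\setminus\{0\}$; density of $Z_A(\mathscr{H})$ in $\mathbf{R}(A^{1/2})$ and continuity of $\widetilde{T}$ upgrade this to the full operator norm $\|\widetilde{T}\|_{\mathcal{B}(\mathbf{R}(A^{1/2}))}$. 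The analogues for $w_A$ and $c_A$ follow identically after replacing the norm identity with the quadratic-form identity
\[
|\langle Tx,x\rangle_A|=|(\widetilde{T}Z_Ax,Z_Ax)|,
\]
taking the supremum (respectively infimum) over $A$-unit $x$, and passing to the closure via continuity of the map $u\mapsto(\widetilde{T}u,u)$.

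The only real obstacle is the density/continuity step: one must verify that sup (and inf) over the dense subset $Z_A(\mathscr{H})\cap\mathbb{S}_{\mathbf{R}(A^{1/2})}$ agree with sup (and inf) over the whole unit sphere of $\mathbf{R}(A^{1/2})$. For the supremum this is automatic by continuity; for the infimum defining $c(\widetilde{T})=c_A(T)$ one invokes the same continuity argument on the Crawford functional and the fact that $\widetilde{T}$ is bounded on $\mathbf{R}(A^{1/2})$.
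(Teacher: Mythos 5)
Your proposal is correct: the paper imports this lemma from \cite{Ar, kf} without proof, and your argument—intertwining via $Z_A$, the identity $(Z_Ax,Z_Ay)=\langle x,y\rangle_A$, and density of $\mathcal{R}(A)$ in $\mathbf{R}(A^{1/2})$ to pass suprema/infima and sesquilinear-form identities from the dense subspace to all of $\mathbf{R}(A^{1/2})$—is exactly the standard route, and is the same technique the paper itself uses in the proof of Theorem \ref{lemm1}. No gaps; the only points worth making explicit are that $T^\sharp\in\mathbb{B}_A(\mathscr{H})$ (so $\widetilde{T^\sharp}$ exists) and that the unit sphere of $\mathcal{R}(A)$ is dense in that of $\mathbf{R}(A^{1/2})$, both of which your argument implicitly and correctly relies on.
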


Now, we prove the following result related to $A$-Euclidean operator radius and Euclidean operator radius.

\begin{theorem} \label{lemm1}
	Let $\mathbf{T}=(T_1,T_2, \ldots, T_d) \in {\mathbb{B}_{A^{1/2}}(\mathscr{H})}^d.$ Then
	\begin{eqnarray*}
		w_{A,e}(\mathbf{T})=w_{A,e}(T_1,T_2,...,T_d)=w_e(\widetilde{T_1},\widetilde{T_2},....,\widetilde{T_d})=w_e(\widetilde{\mathbf{T}})
	\end{eqnarray*} 
	where $\widetilde{\mathbf{T}}= (\widetilde{T_1},\widetilde{T_2},....,\widetilde{T_d})\in {\mathbb{B}(\mathbf{R}(A^{1/2}))}^d.$
\end{theorem}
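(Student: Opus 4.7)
The plan is to reduce the identity to the scalar case by exploiting the intertwining relation $Z_A T_k = \widetilde{T_k} Z_A$ from Proposition \ref{p10}, and then to translate the supremum defining $w_{A,e}$ over the $A$-unit sphere of $\mathscr{H}$ into a supremum over the unit sphere of $\mathbf{R}(A^{1/2})$ by using the isometric-like relation $\|Ax\|_{\mathbf{R}(A^{1/2})}=\|x\|_A$. This is essentially the same mechanism behind Lemma \ref{a2}$(ii)$, just applied coordinate-wise.

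First, I would compute, for each $k=1,\dots,d$ and every $x\in\mathscr{H}$, the basic identity
\[
\langle T_k x,x\rangle_A \;=\; (AT_k x,Ax) \;=\; (Z_A T_k x,Z_A x) \;=\; (\widetilde{T_k}\, Z_A x,Z_A x) \;=\; (\widetilde{T_k}\, Ax,Ax),
\]
where I used $(Au,Av)=\langle u,v\rangle_A$ for all $u,v\in\mathscr{H}$ together with $Z_A T_k = \widetilde{T_k} Z_A$. Squaring and summing in $k$, and taking $x\in\mathbb{S}_{\|\cdot\|_A}$, I obtain
\[
\Bigl(\sum_{k=1}^{d}|\langle T_k x,x\rangle_A|^{2}\Bigr)^{1/2} \;=\; \Bigl(\sum_{k=1}^{d}|(\widetilde{T_k}\, Ax,Ax)|^{2}\Bigr)^{1/2},
\]
and since $\|Ax\|_{\mathbf{R}(A^{1/2})}=\|x\|_A=1$, the vector $Ax$ lies in the unit sphere of $\mathbf{R}(A^{1/2})$. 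Taking sup over $x\in\mathbb{S}_{\|\cdot\|_A}$ on the left gives $w_{A,e}(\mathbf{T})$, while the right-hand side is bounded above by $w_e(\widetilde{\mathbf{T}})$.

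For the reverse inequality, I would use that $Z_A(\mathscr{H})=\mathcal{R}(A)$ is dense in $\mathbf{R}(A^{1/2})$, which follows from the construction of $\mathbf{R}(A^{1/2})$ as the completion of $\mathscr{H}/\mathcal{N}(A)$ recalled in the introduction. Given any unit vector $y\in\mathbf{R}(A^{1/2})$, one can approximate it in $\|\cdot\|_{\mathbf{R}(A^{1/2})}$ by vectors of the form $Ax_n$, and after normalizing (using $\|Ax_n\|_{\mathbf{R}(A^{1/2})}=\|x_n\|_A$) one may assume $\|x_n\|_A=1$. The functional $y\mapsto\bigl(\sum_{k=1}^{d}|(\widetilde{T_k}y,y)|^{2}\bigr)^{1/2}$ is continuous on $\mathbf{R}(A^{1/2})$ because each $\widetilde{T_k}$ is bounded, so passing to the limit yields
\[
\Bigl(\sum_{k=1}^{d}|(\widetilde{T_k}y,y)|^{2}\Bigr)^{1/2} \;\leq\; w_{A,e}(\mathbf{T}),
\]
and taking the supremum over unit $y$ gives $w_e(\widetilde{\mathbf{T}})\le w_{A,e}(\mathbf{T})$. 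Combining both inequalities yields the claimed equality.

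The main (mild) obstacle is the density/continuity step, since the set $\{Ax:\|x\|_A=1\}$ is in general only dense in the unit sphere of $\mathbf{R}(A^{1/2})$ rather than equal to it; everything else is a direct coordinate-wise reading of the scalar proof of Lemma \ref{a2}$(ii)$.
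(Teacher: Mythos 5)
Your proposal is correct and follows essentially the same route as the paper: the intertwining identity $\langle T_kx,x\rangle_A=(\widetilde{T_k}Ax,Ax)$ together with $\|Ax\|_{\mathbf{R}(A^{1/2})}=\|x\|_A$ gives $w_{A,e}(\mathbf{T})\le w_e(\widetilde{\mathbf{T}})$, and the density of $\mathcal{R}(A)$ in $\mathbf{R}(A^{1/2})$ plus a limiting argument gives the reverse inequality. Your phrasing of the reverse step via continuity of $y\mapsto\bigl(\sum_{k}|(\widetilde{T_k}y,y)|^{2}\bigr)^{1/2}$ is in fact a slightly cleaner packaging of the paper's argument, which reaches the same conclusion by showing the relevant value set is contained in the closure of the other.
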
 
\begin{proof}
	First we prove $w_{A,e}(\mathbf{T})\leq w_e(\widetilde{\mathbf{T}})$. We recall that
	\begin{eqnarray*}
		w_{A,e}(\mathbf{T})&=&\sup\left\lbrace\left(\sum_{i=1}^{d} |\langle T_i x,x\rangle|^2\right)^\frac12 :\hspace{0.1cm} x\in\mathscr{H}, \hspace{0.1cm}\|x\|_A=1\right\rbrace\\&=& \sup\left\lbrace\left(\sum_{i=1}^{d} |(A T_i x,Ax)|^2\right)^\frac12 :\hspace{0.1cm} x\in\mathscr{H}, \hspace{0.1cm}\|Ax\|_{\mathbf{R}( A^{1/2})}=1\right\rbrace\\&=& \sup\left\lbrace\left(\sum_{i=1}^{d} |( \widetilde{T_i}A x,Ax)|^2\right)^\frac12 :\hspace{0.1cm} x\in\mathscr{H}, \hspace{0.1cm}\|Ax\|_{\mathbf{R}( A^{1/2})}=1\right\rbrace\\&&\,\,\,\,\,\,\,\,\,\,\,\,\,\,\,\,\,\,\,\,\,\,\,\,\,\,\,\,\,\,\,\,\,\,\,\,\,\,\,\,\,\,\,\,\,\,\,\,\,\,\,\,\,\,\,\,\,\,\,\,\,\,\,\,\,\,\,\,\,\,(\textit{using Proposition \ref{p10}}).
	\end{eqnarray*} 
	From the decomposition $\mathscr{H}=\mathcal{N}(A^{1/2})\oplus\overline{\mathcal{R}(A^{1/2}})$, we obtain that \begin{eqnarray}\label{01}
		w_{A,e}(\mathbf{T})&=& \sup\left\lbrace\left(\sum_{i=1}^{d} |( \widetilde{T_i}A x,Ax)|^2\right)^\frac12 :\hspace{0.1cm} x\in \overline {\mathcal{R}(A^{1/2})}, \hspace{0.1cm}\|Ax\|_{\mathbf{R}( A^{1/2})}=1\right\rbrace.
	\end{eqnarray}
	Now, 
	\begin{eqnarray}\label{02}
	& & 	w_e(\widetilde{\mathbf{T}}) \nonumber\\
		&=&\sup\left\lbrace\left(\sum_{i=1}^{d} |(\widetilde{T_i}y,y)|^2\right)^\frac12:\hspace{0.1cm} y\in\mathcal{R}(A^{1/2}),\hspace{0.1cm} \|y\|_{\mathbf{R}(A^{1/2})}=1\right\rbrace \nonumber\\
		&=&\sup\left\lbrace\left(\sum_{i=1}^{d} |(\widetilde{T_i}A^{1/2}x,A^{1/2}x)|^2\right)^\frac12:\hspace{0.1cm} x\in\mathscr{H},\hspace{0.1cm} \|A^{1/2}x\|_{\mathbf{R}(A^{1/2})}=1\right\rbrace \nonumber\\
		&=&\sup\left\lbrace\left(\sum_{i=1}^{d} |(\widetilde{T_i}A^{1/2}x,A^{1/2}x)|^2\right)^\frac12: x\in\overline{\mathcal{R}(A^{1/2})}, \|A^{1/2}x\|_{\mathbf{R}(A^{1/2})}=1\right\rbrace.
	\end{eqnarray}
	Since $\mathcal{R}(A) \subseteq \mathcal{R}(A^{1/2})$,  \eqref{01} together with  \eqref{02}  implies $w_{A,e}(\mathbf{T}) \leq w_e(\widetilde{\mathbf{T}}).$
	
	Next we show the reverse inequality, i.e, $ w_A(\widetilde{\mathbf{T}})\leq w_{A,e}(\mathbf{T}).$
	Suppose that 
	\begin{eqnarray*}
		\beta\in\left\lbrace\left(\sum_{i=1}^{d} |(\widetilde{T_i}A^{1/2}x,A^{1/2}x)|^2\right)^\frac12:\hspace{0.1cm} x\in\overline{\mathcal{R}(A^{1/2})},\hspace{0.1cm} \|A^{1/2}x\|_{\mathbf{R}(A^{1/2})}=1\right\rbrace =W_{e}(\widetilde{\mathbf{T}}),\, (\textit{say}).
	\end{eqnarray*}
	So, there exists  $ x\in\overline{\mathcal{R}(A^{1/2})}$ with $ \|A^{1/2}x\|_{\mathbf{R}(A^{1/2})}=1$ such that 
	$$\beta=\left(\sum_{i=1}^{d} |(\widetilde{T_i}A^{1/2}x,A^{1/2}x)|^2\right)^\frac12.$$
	Since $ A^{1/2}x \in \mathbf{R}(A^{1/2})$ and $\mathcal{R}(A) $ is dense in $\mathbf{R}(A^{1/2})$, there exist a sequence $\{x_n\}$ in $\mathscr{H}$ such that $\lim_{n\to \infty} \|Ax_n-A^{1/2}x\|_{\mathbf{R}(A^{1/2})}=0$. Hence   $\beta=\lim_{n\rightarrow\infty} \left(\sum_{i=1}^{d} |(\widetilde{T_i}Ax_n,Ax_n)|^2\right)^\frac12$  and $\lim_{n\rightarrow\infty}\|Ax_n\|_{\mathbf{R}(A^{1/2})}=1.$
	Now, let $y_n=\frac{x_n}{\|Ax_n\|_{\mathbf{R}(A^{1/2})}}.$ Then clearly we have, $\beta=\lim_{n\rightarrow\infty} \left(\sum_{i=1}^{d} |(\widetilde{T_i}Ay_n,Ay_n)|^2\right)^\frac12$  and $\|Ay_n\|_{\mathbf{R}(A^{1/2})}=1.$ Therefore, 
	$$\beta\in\overline {\left\lbrace\left(\sum_{i=1}^{n} |( \widetilde{T_i}A x,Ax)|^2\right)^\frac12 :\hspace{0.1cm} x\in \overline {\mathcal{R}(A^{1/2})}, \hspace{0.1cm}\|Ax\|_{\mathbf{R}( A^{1/2})}=1\right\rbrace}=\overline{W_{A,e}(\mathbf{T})},\, (\textit{say}).$$ 
	Hence, $ W_{e}(\widetilde{\mathbf{T}}) \subseteq \overline{W_{A,e}(\mathbf{T})}.$
	This implies $ w_e(\widetilde{\mathbf{T}}) \leq w_{A,e}(\mathbf{T}),$ and this completes the proof.
\end{proof}

Now, we are in a position to prove the bounds of $A$-Euclidean operator radius. In the following theorem we obtain  upper and lower bound for the $A$-Euclidean operator radius of $2$-tuple operators in $\mathbb{B}_A(\mathscr{H})$ involving $A$-numerical radius.

\begin{theorem}\label{th1} Let  $B,C \in\mathbb{B}_A(\mathscr{H})$, then 
	\begin{eqnarray*}
&& \frac{1}{2} w_A(B^2+C^2)+\frac{1}{2}\max \{w_A(B),w_A(C)\}\big| w_A(B+C)-w_A(B-C)\big|\\
&& \leq w_{A,e}^2(B,C)\\
&& \leq\frac{1}{\sqrt{2}}  w_A((B^\sharp B+C^\sharp C)+i(BB^\sharp+CC^\sharp)).
\end{eqnarray*}
\end{theorem}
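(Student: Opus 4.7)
The proof splits naturally into the upper bound and the lower bound. For the upper bound, the plan is to estimate $|\langle Bx, x\rangle_A|^2 + |\langle Cx, x\rangle_A|^2$ pointwise at each $A$-unit $x$ and then take the supremum. Using the $A$-Cartesian decomposition $B = \Re_A(B) + i \Im_A(B)$ recalled in the introduction, both components are $A$-selfadjoint, so $\langle \Re_A(B)x, x\rangle_A$ and $\langle \Im_A(B)x, x\rangle_A$ are real, and $|\langle Bx, x\rangle_A|^2 = \langle \Re_A(B)x, x\rangle_A^2 + \langle \Im_A(B)x, x\rangle_A^2$. The Cauchy--Schwarz inequality in the semi-inner product together with the identity $\|Sx\|_A^2 = \langle S^2 x, x\rangle_A$ (valid for any $A$-selfadjoint $S$ since $S^*A = AS$) gives $\langle Sx, x\rangle_A^2 \leq \langle S^2 x, x\rangle_A$ for $A$-unit $x$; combining with the algebraic identity $\Re_A(B)^2 + \Im_A(B)^2 = \tfrac{1}{2}(B^\sharp B + BB^\sharp)$ yields $|\langle Bx, x\rangle_A|^2 \leq \tfrac{1}{2}\langle (B^\sharp B + BB^\sharp)x, x\rangle_A$. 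Adding the analogous estimate for $C$ and setting $P = B^\sharp B + C^\sharp C$, $Q = BB^\sharp + CC^\sharp$ (both $A$-positive, so $\langle Px, x\rangle_A, \langle Qx, x\rangle_A \geq 0$), the scalar inequality $a+b \leq \sqrt{2}\sqrt{a^2+b^2}$ produces
\[
|\langle Bx, x\rangle_A|^2 + |\langle Cx, x\rangle_A|^2 \leq \tfrac{1}{\sqrt{2}} \sqrt{\langle Px, x\rangle_A^2 + \langle Qx, x\rangle_A^2} = \tfrac{1}{\sqrt{2}}|\langle (P + iQ)x, x\rangle_A|,
\]
and taking the supremum gives the upper bound.

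For the lower bound, the plan is to combine the complex parallelogram identity with Berger's inequality. Applied to $z_1 = \langle Bx, x\rangle_A$, $z_2 = \langle Cx, x\rangle_A$, parallelogram rewrites $|\langle Bx, x\rangle_A|^2 + |\langle Cx, x\rangle_A|^2$ as $\tfrac{1}{2}(|\langle (B+C)x, x\rangle_A|^2 + |\langle (B-C)x, x\rangle_A|^2)$. Dropping either non-negative term before the supremum gives $w_{A,e}^2(B,C) \geq \tfrac{1}{2}w_A(B+C)^2$ and $\geq \tfrac{1}{2}w_A(B-C)^2$, so $w_{A,e}^2(B,C) \geq \tfrac{1}{2}\max\{w_A(B+C)^2, w_A(B-C)^2\}$. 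Expanding the maximum via $\max\{a,b\} = \tfrac{1}{2}(a+b) + \tfrac{1}{2}|a-b|$ with $a = w_A(B+C)^2$, $b = w_A(B-C)^2$ yields
\[
w_{A,e}^2(B,C) \geq \tfrac{1}{4}\bigl(w_A(B+C)^2 + w_A(B-C)^2\bigr) + \tfrac{1}{4}|w_A(B+C)-w_A(B-C)|\bigl(w_A(B+C)+w_A(B-C)\bigr).
\]
Two sub-estimates close the argument. Berger's inequality $w_A(T^2) \leq w_A(T)^2$ (cited in the introduction) applied to $T = B \pm C$, combined with the identity $(B+C)^2 + (B-C)^2 = 2(B^2+C^2)$ and sub-additivity of $w_A$, gives $w_A(B+C)^2 + w_A(B-C)^2 \geq w_A((B+C)^2) + w_A((B-C)^2) \geq 2w_A(B^2+C^2)$. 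And since $2B = (B+C)+(B-C)$ and $2C = (B+C)-(B-C)$, the triangle inequality for $w_A$ yields $w_A(B+C) + w_A(B-C) \geq 2\max\{w_A(B), w_A(C)\}$. Substituting both bounds into the displayed inequality produces exactly the claimed lower bound.

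The main conceptual point is that the term $w_A(B^2+C^2)$ does not arise pointwise from $|\langle Bx, x\rangle_A|^2 + |\langle Cx, x\rangle_A|^2$ at a fixed $x$; one has to first pass to $w_A(B \pm C)^2$ via the parallelogram identity and only then invoke the Berger/sub-additivity combination to land on $w_A(B^2+C^2)$. The splitting $\max\{a,b\} = \tfrac{1}{2}(a+b) + \tfrac{1}{2}|a-b|$ is the algebraic hinge that simultaneously produces the refinement term $\tfrac{1}{2}\max\{w_A(B),w_A(C)\}|w_A(B+C)-w_A(B-C)|$ from the non-negative gap between $w_A(B+C)^2$ and $w_A(B-C)^2$ at no extra cost.
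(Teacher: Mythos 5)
Your proof is correct. The lower bound follows the paper's own route almost verbatim: both arguments first reduce to $w_{A,e}^2(B,C)\ge\tfrac12\max\{w_A^2(B+C),w_A^2(B-C)\}$, split the maximum as $\tfrac12(a+b)+\tfrac12|a-b|$, and then invoke the power inequality $w_A(T^2)\le w_A^2(T)$ together with subadditivity of $w_A$ and the identity $(B+C)^2+(B-C)^2=2(B^2+C^2)$; the only cosmetic difference is that you extract $\max\{w_A(B),w_A(C)\}$ in one stroke from $w_A(B+C)+w_A(B-C)\ge 2\max\{w_A(B),w_A(C)\}$, whereas the paper first derives the $w_A(B)$ version and then interchanges $B$ and $C$. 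The upper bound, however, is genuinely different. The paper proves the estimate for ordinary Hilbert-space operators using the mixed Schwarz inequality (Lemma \ref{lem1}), the elementary inequality $(ab+cd)^2\le(a^2+c^2)(b^2+d^2)$, and the H\"older--McCarthy inequality (Lemma \ref{a3}), arriving at $\bigl(|\langle Bx,x\rangle|^2+|\langle Cx,x\rangle|^2\bigr)^2\le\langle Px,x\rangle\langle Qx,x\rangle\le\tfrac12\bigl(\langle Px,x\rangle^2+\langle Qx,x\rangle^2\bigr)$, and then transfers the conclusion to the semi-Hilbertian setting through the operators $\widetilde B,\widetilde C$ on $\mathbf{R}(A^{1/2})$ via Proposition \ref{p10}, Lemma \ref{a2} and Theorem \ref{lemm1}. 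You instead work intrinsically in $(\mathscr{H},\langle\cdot,\cdot\rangle_A)$: the $A$-Cartesian decomposition combined with $\Re_A(B)^2+\Im_A(B)^2=\tfrac12(B^\sharp B+BB^\sharp)$ gives $|\langle Bx,x\rangle_A|^2\le\tfrac12\langle(B^\sharp B+BB^\sharp)x,x\rangle_A$ directly, and the scalar bound $a+b\le\sqrt2(a^2+b^2)^{1/2}$ finishes. This is shorter and dispenses entirely with the $\mathbf{R}(A^{1/2})$ machinery and the two auxiliary lemmas; the paper's intermediate product bound $\langle Px,x\rangle\langle Qx,x\rangle$ is pointwise sharper than your $\tfrac14\bigl(\langle Px,x\rangle+\langle Qx,x\rangle\bigr)^2$ by AM--GM, but both are subsequently relaxed to the same quantity $\tfrac12\bigl(\langle Px,x\rangle^2+\langle Qx,x\rangle^2\bigr)$, so the two arguments deliver the identical final bound.
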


\begin{proof}
Let $x \in \mathscr{H}$ with $\|x\|_A=1$. Then we have,
 \begin{eqnarray*}
|\left\langle Bx,x\right\rangle_A|^2+|\left\langle Cx,x\right\rangle_A|^2&\geq& \frac{1}{2}\left(|\left\langle Bx,x\right\rangle_A|+|\left\langle Cx,x\right\rangle_A|\right)^2\\&\geq&\frac{1}{2}\left(|\left\langle Bx,x\right\rangle_A \pm \left\langle Cx,x\right\rangle_A|\right)^2\\&=&\frac{1}{2}|\left\langle (B\pm C)x,x\right\rangle_A|^2.
\end{eqnarray*}
Taking supremum over all $x\in \mathscr{H}$, $\|x\|_A=1$, we get 
\begin{eqnarray}
w_{A,e}^2(B,C)\geq\frac{1}{2} w_A^2(B\pm C).
\label{eq1}\end{eqnarray}
Therefore, it follows from the inequalities in (\ref{eq1}) that
 \begin{eqnarray*}
 w_{A,e}^2(B,C)&\geq&\frac{1}{2}\max \{w_A^2(B+C), w_A^2(B-C)\}\\&=& \frac{w_A^2(B+C)+w_A^2(B-C)}{4}+\frac{\big|w_A^2(B+C)-w_A^2(B-C)\big|}{4}\\&\geq& \frac{w_A((B+C)^2)+w_A((B-C)^2)}{4}\\
 && +(w_A(B+C)+w_A(B-C))\frac{\big|w_A(B+C)-w_A(B-C)\big|}{4}\\&\geq& \frac{w_A((B+C)^2+(B-C)^2)}{4}\\
 && +w_A((B+C)+(B-C))\frac{\big|w_A(B+C)-w_A(B-C)\big|}{4}.	 \end{eqnarray*}
Therefore,
\begin{eqnarray}\label{p1}
	w_{A,e}^2(B,C)&\geq&\frac{w_A(B^2+C^2)}{2}+\frac{w_A(B)}{2}\big|w_A(B+C)-w_A(B-C)\big|.
\end{eqnarray}
Interchanging $B$ and $C$ in \eqref{p1}, we arrive 
 \begin{eqnarray}\label{p2} w_{A,e}^2(B,C)&\geq&\frac{w_A(B^2+C^2)}{2}+\frac{w_A(C)}{2}\big|w_A(B+C)-w_A(B-C)\big|.
\end{eqnarray} 
The inequality \eqref{p1} together with \eqref{p2}, gives the first inequality.

 Next, we prove the second inequality.
 Let $x\in\mathscr{H}$ with $\|x\|=1$. Then we have, 
 \begin{eqnarray*}
&& (|\left\langle Bx,x\right\rangle|^2+|\left\langle Cx,x\right\rangle|^2)^2\\
&&\leq (\left\langle |B|x,x\right\rangle\left\langle |B^*|x,x\right\rangle+\left\langle |C|x,x\right\rangle\left\langle |C^*|x,x\right\rangle)^2\,\,\,(\textit{using Lemma \ref{lem1}})\\
&&\leq  (\left\langle |B|x,x\right\rangle^2+\left\langle |C|x,x\right\rangle^2)(\left\langle |B^*|x,x\right\rangle^2+\left\langle |C^*|x,x\right\rangle^2)\\
  	&&\,\,\,\,\,\,\,\,\,\,\,\,\,\,\,\,\,\,\,\,\,\,\,\, (\textit{since $(ab+cd)^2\leq (a^2+c^2)(b^2+d^2)$ for all  $a,b,c,d\in \mathbb{R}$})\\
  	&&\leq  (\left\langle |B|^2 x,x\right\rangle+\left\langle |C|^2 x,x\right\rangle)(\left\langle |B^*|^2 x,x\right\rangle+\left\langle |C^*|^2 x,x\right\rangle)\,\,\,(\textit{using Lemma \ref{a3}})\\
  	&&= \left\langle (B^*B+C^*C) x,x\right\rangle \left\langle(BB^*+CC^*) x,x\right\rangle\\
  	&&\leq \frac{1}{2}\left\lbrace\left\langle (B^*B+C^*C) x,x\right\rangle^2 +\left\langle(BB^*+CC^*) x,x\right\rangle^2\right\rbrace\\
  	&&=  \frac{1}{2} |\left\langle (B^*B+C^*C) x,x\right\rangle +i\left\langle(BB^*+CC^*) x,x\right\rangle|^2\\
  	&&=\frac{1}{2} |\left\langle ((B^*B+C^*C) +i(BB^*+CC^*)) x,x\right\rangle|^2 \\
  	&&\leq \frac{1}{2} w^2((B^*B+C^*C) +i(BB^*+CC^*)).
 \end{eqnarray*}
 Taking supremum over all $x \in \mathscr{H}$ with $\|x\|=1$, we get
\begin{eqnarray}\label{p3}
	w_e^2(B,C)&\leq&\frac{1}{\sqrt{2}} w((B^*B+C^*C) +i(BB^*+CC^*)).
\end{eqnarray}
As $B,C\in \mathbb{B}_{A^{1/2}}(\mathscr{H})$, following Proposition \ref{p10}, there exist unique $\widetilde{B}$ and $\widetilde{C}$ in $\mathbb{B}(\mathbf{R}(A^{1/2}))$
 such that $ Z_AB=\widetilde{B}Z_A $ and  $ Z_AC=\widetilde{C}Z_A $. 
 The inequality (\ref{p3}) implies that
  \begin{eqnarray}
w_e^2(\widetilde{B},\widetilde{C})\leq\frac{1}{\sqrt{2}}w((\widetilde{B}^*\widetilde{B}+\widetilde{C}^*\widetilde{C})+i (\widetilde{B}\widetilde{B}^*+\widetilde{C}\widetilde{C}^*)).
\label{p4}\end{eqnarray}
Since $ (\widetilde{B})^*=\widetilde{B^\sharp }$, the inequality (\ref{p4}) becomes  \begin{eqnarray}
w_e^2(\widetilde{B},\widetilde{C})\leq\frac{1}{\sqrt{2}}w((\widetilde{B^\sharp}\widetilde{B}+\widetilde{C^\sharp}\widetilde{C})+i (\widetilde{B}\widetilde{B^\sharp}+\widetilde{C}\widetilde{C^\sharp})).
\label{p5}\end{eqnarray}
For any $ S,T\in\mathbb{B}_{A^{1/2}}(\mathscr{H})$, it is easy to see that   $\widetilde{ST}=\widetilde{S}\widetilde{T}$ and $\widetilde{S+\lambda T}=\widetilde{ S}+\lambda \widetilde{T}$ for all $\lambda\in \mathbb{C}$.
 So, the inequality (\ref{p5}) is of the following form
  \begin{eqnarray}
w_e^2(\widetilde{B},\widetilde{C})\leq\frac{1}{\sqrt{2}}w((\widetilde{B^\sharp B+C^\sharp C)+i (B B^\sharp+C C^\sharp)}).
\label{pa6}\end{eqnarray} 
Now, by applying Theorem \ref{lemm1} and Lemma \ref{a2}, we have  $$w_{A,e}^2(B,C)\leq\frac{1}{\sqrt{2}}  w_A((B^\sharp B+C^\sharp C)+i(BB^\sharp+CC^\sharp)).$$
This completes the proof.
 \end{proof}

%....................................................................................

\begin{remark}
(i) The lower bound of $w_e(B,C)$ in Theorem \ref{th1} is stronger than the lower bound in \cite[Th. 2.8]{kf1}, namely, $\frac{1}{2} w_A(B^2+C^2)\leq w^2_{A,e}(B,C).$
Also, it is not difficult to verify that $$ \frac{1}{\sqrt{2}}w_A((B^\sharp B+C^\sharp C)+i (B B^\sharp+C C^\sharp))\leq \frac{1}{\sqrt{2}} \left\lbrace\|B^\sharp B+C^\sharp C\|^2_A+\|BB^\sharp +CC^\sharp \|^2_A\right\rbrace^\frac12. $$
Therefore, the upper bound of $w_{A,e}(B,C)$ in Theorem \ref{th1} is better than the upper bound in \cite[Th. 2.8]{kf1}, namely, $w^2_{A,e}(B,C)\leq \|BB^\sharp +CC^\sharp \|_A$ if $\|BB^\sharp +CC^\sharp \|_A\leq \|B^\sharp B+C^\sharp C\|_A$. 

(ii) Following Theorem \ref{th1},  $w^2_{A,e}(B,C)=\frac12 w_A(B^2+C^2)$ implies $w_A(B+C)=w_A(B-C).$ However, the converse is not true, in general.

%By considering $C=0$, we conclude that the converse part does not always hold.
\end{remark}
 
 The following corollary is an immediate consequence of Theorem \ref{th1}.
 
\begin{cor}
	If $ B,C\in\mathbb{B}_A(\mathscr{H})$ are $A$-selfadjoint, then
\begin{eqnarray*}
\frac{1}{2} \|B^2+C^2\|_A+\frac{1}{2}\max \{\|B\|_A,\|C\|_A\}\big| \|B+C\|_A-\|B-C\|_A\big|  \leq w_{A,e}^2(B,C).
\label{eq2}
\end{eqnarray*}
\end{cor}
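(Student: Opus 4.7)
The plan is to deduce the corollary as a direct consequence of the lower bound in Theorem \ref{th1}, combined with the identity $w_A(T) = \|T\|_A$ which, as recalled in the introduction (citing \cite{Z}), holds for every $A$-selfadjoint operator $T$.

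The preparatory step is to check that, under the hypothesis that $B$ and $C$ are $A$-selfadjoint, every operator appearing on the left-hand side of the lower bound in Theorem \ref{th1}, namely $B$, $C$, $B+C$, $B-C$, and $B^2 + C^2$, is itself $A$-selfadjoint. The linear combinations $B \pm C$ are immediate, since $A(B\pm C) = AB \pm AC = B^*A \pm C^*A = (B\pm C)^*A$. For the quadratic terms, the computation
$$A B^2 = (AB)B = B^*AB = B^*(AB) = B^*B^*A = (B^2)^*A$$
shows that $B^2$ is $A$-selfadjoint, and the same argument works for $C^2$; hence $B^2 + C^2$ is $A$-selfadjoint as well.

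With these verifications in hand, the identity $w_A(T) = \|T\|_A$ applies to each of $B$, $C$, $B\pm C$, and $B^2+C^2$, so every occurrence of $w_A(\cdot)$ in the lower bound of Theorem \ref{th1} may be replaced by the corresponding $\|\cdot\|_A$. This substitution yields the stated inequality verbatim, and completes the argument. There is no real obstacle here; the only care required is the short product computation establishing $A$-selfadjointness of $B^2$ and $C^2$, where the relation $AB = B^*A$ must be invoked twice.
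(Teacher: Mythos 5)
Your proposal is correct and matches the paper's intent exactly: the paper presents this corollary as an immediate consequence of Theorem \ref{th1}, obtained by substituting $w_A(T)=\|T\|_A$ for the $A$-selfadjoint operators $B$, $C$, $B\pm C$, and $B^2+C^2$. Your explicit verification that $B^2$ and $C^2$ (hence $B^2+C^2$) are $A$-selfadjoint, via $AB^2=B^*AB=(B^2)^*A$, is the only detail the paper leaves implicit, and you have it right.
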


In particular, considering $ B=[\Re_A(T)]^\sharp $ and $ C=[\Im_A(T)]^\sharp $  in Theorem \ref{th1}, and the using the Lemma \ref{refa1}. we obtain the following new upper and lower bounds for the $A$-numerical radius of a bounded linear operator $T \in \mathbb{B}_A(\mathscr{H})$.
  
\begin{cor} \label{pcor}
	If $T\in \mathbb{B}_A(\mathscr{H})$, then
\begin{eqnarray*}
\frac{1}{4} \|T^\sharp T+TT^\sharp\|_A+ \frac{\alpha}{2}\max \{\|\Re_A(T)\|_A,\|\Im_A(T)\|_A\}
\leq w_A^2(T) \leq \frac{1}{2} \| TT^\sharp+T^\sharp T  \|_A,
\end{eqnarray*}
where $\alpha= \big| \|\Re_A(T)+\Im_A(T)\|_A-\|\Re_A(T)-\Im_A(T)\|_A\big|.$
\end{cor}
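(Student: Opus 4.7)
The plan is to substitute $B = [\Re_A(T)]^\sharp$ and $C = [\Im_A(T)]^\sharp$ into both inequalities of Theorem~\ref{th1} and simplify. Set $R := \Re_A(T)$ and $I := \Im_A(T)$, which are $A$-selfadjoint. By Lemma~\ref{refa1} applied twice, $B = R^\sharp$ and $C = I^\sharp$ are themselves $A$-selfadjoint and satisfy $B^\sharp = B$, $C^\sharp = C$.

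First I would identify the middle quantity of Theorem~\ref{th1}. Using the basic relation $\langle S^\sharp x,x\rangle_A = \overline{\langle Sx,x\rangle_A}$, one gets $|\langle Bx,x\rangle_A| = |\langle Rx,x\rangle_A|$ and $|\langle Cx,x\rangle_A| = |\langle Ix,x\rangle_A|$; combined with the $A$-Cartesian identity $|\langle Rx,x\rangle_A|^2 + |\langle Ix,x\rangle_A|^2 = |\langle Tx,x\rangle_A|^2$ recalled in the introduction, this yields $w_{A,e}^2(B,C) = w_A^2(T)$.

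For the lower bound, $B\pm C$ and $B^2+C^2$ are $A$-selfadjoint, hence $w_A(\cdot) = \|\cdot\|_A$ on each. Passing to $\mathbf{R}(A^{1/2})$, the $A$-selfadjointness of $R$ forces $\widetilde{R}$ to be selfadjoint, so $\widetilde{R^\sharp} = \widetilde{R}^{*} = \widetilde{R}$; Lemma~\ref{a2} then gives $\|B\|_A = \|R\|_A$, $\|C\|_A = \|I\|_A$, and $\|B\pm C\|_A = \|R\pm I\|_A$. Thus the second summand in the lower bound of Theorem~\ref{th1} becomes exactly $\tfrac{\alpha}{2}\max\{\|\Re_A(T)\|_A, \|\Im_A(T)\|_A\}$. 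For the upper bound, since $B^\sharp = B$ and $C^\sharp = C$,
\begin{equation*}
\tfrac{1}{\sqrt{2}}\, w_A\bigl((B^\sharp B + C^\sharp C) + i(B B^\sharp + C C^\sharp)\bigr) = \tfrac{1}{\sqrt{2}}\, w_A\bigl((1+i)(B^2+C^2)\bigr) = \|B^2+C^2\|_A.
\end{equation*}

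The crux is then the identity $\|B^2+C^2\|_A = \tfrac12\|T^\sharp T + T T^\sharp\|_A$, which produces the factor $\tfrac14$ in the lower bound and $\tfrac12$ in the upper bound. I would prove it in $\mathbf{R}(A^{1/2})$: since $\widetilde{R}, \widetilde{I}$ are selfadjoint and $\widetilde{T} = \widetilde{R} + i\widetilde{I}$, a direct expansion gives
\begin{equation*}
\widetilde{T}^{*}\widetilde{T} + \widetilde{T}\widetilde{T}^{*} = (\widetilde{R} - i\widetilde{I})(\widetilde{R} + i\widetilde{I}) + (\widetilde{R} + i\widetilde{I})(\widetilde{R} - i\widetilde{I}) = 2(\widetilde{R}^2 + \widetilde{I}^2),
\end{equation*}
the cross commutator terms cancelling. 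Taking norms and invoking Lemma~\ref{a2} yields $\|T^\sharp T + T T^\sharp\|_A = 2\|\widetilde{R}^2 + \widetilde{I}^2\| = 2\|B^2+C^2\|_A$. Substituting back into Theorem~\ref{th1} produces both bounds simultaneously. The main subtlety to verify is the compatibility $\widetilde{R^\sharp} = \widetilde{R}$ for $A$-selfadjoint $R$, which follows because $A(R - R^\sharp) = 0$, so $R$ and $R^\sharp$ are identified under the $\widetilde{\cdot}$ correspondence.
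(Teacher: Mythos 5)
Your proposal is correct and follows exactly the paper's route: the authors also obtain this corollary by substituting $B=[\Re_A(T)]^\sharp$ and $C=[\Im_A(T)]^\sharp$ into Theorem \ref{th1} and invoking Lemma \ref{refa1}. You simply supply the verifications (the identification $w_{A,e}^2(B,C)=w_A^2(T)$, the passage to $\mathbf{R}(A^{1/2})$, and the identity $\|B^2+C^2\|_A=\tfrac12\|T^\sharp T+TT^\sharp\|_A$) that the paper leaves implicit, and these are all sound.
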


 Again, considering $B= T$ and $C= T^\sharp$ in Theorem \ref{th1}, we get the following new lower bound for the $A$-numerical radius of  $T \in \mathbb{B}_A(\mathscr{H})$.
 
\begin{cor}\label{cor1}
	Let $T\in \mathbb{B}_A(\mathscr{H}),$ then
 	\begin{eqnarray*}
		\frac{1}{2}\|\Re_A(T^2)\|_A+\frac{1}{2}w_A(T)\big| \|\Re_A(T)\|_A-\|\Im_A(T)\|_A\big| \leq w_A^2(T).
	\end{eqnarray*}
\end{cor}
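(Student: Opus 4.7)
The plan is to apply Theorem \ref{th1} directly with the choice $B=T$ and $C=T^\sharp$, and then simplify each term in the resulting inequality using the $A$-Cartesian decomposition of $T$ together with the elementary identity $|\langle T^\sharp x,x\rangle_A| = |\langle Tx,x\rangle_A|$.

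First I would establish a few auxiliary identities. Starting from $AT^\sharp=T^*A$ one checks that $\langle T^\sharp x,x\rangle_A = \overline{\langle Tx,x\rangle_A}$ for every $x\in\mathscr{H}$, which immediately yields both $w_A(T^\sharp)=w_A(T)$ and
\begin{eqnarray*}
w_{A,e}^2(T,T^\sharp) = \sup_{\|x\|_A=1}\bigl(|\langle Tx,x\rangle_A|^2 + |\langle T^\sharp x,x\rangle_A|^2\bigr) = 2w_A^2(T).
\end{eqnarray*}
From the $A$-Cartesian decomposition I have $T+T^\sharp = 2\Re_A(T)$ and $T-T^\sharp = 2i\Im_A(T)$; since $\Re_A(T)$ and $\Im_A(T)$ are $A$-selfadjoint, the identity $w_A(\,\cdot\,)=\|\cdot\|_A$ on $A$-selfadjoint operators gives
\begin{eqnarray*}
w_A(T+T^\sharp)=2\|\Re_A(T)\|_A, \qquad w_A(T-T^\sharp)=2\|\Im_A(T)\|_A.
\end{eqnarray*}
Finally, using that $\mathbb{B}_A(\mathscr{H})$ is a subalgebra and that $(ST)^\sharp=T^\sharp S^\sharp$, one gets $(T^\sharp)^2=(T^2)^\sharp$, so $T^2+(T^\sharp)^2 = 2\Re_A(T^2)$ is itself $A$-selfadjoint and hence $w_A(T^2+(T^\sharp)^2)=2\|\Re_A(T^2)\|_A$.

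Plugging these four identities into the lower bound in Theorem \ref{th1} transforms the left-hand side into
\begin{eqnarray*}
\|\Re_A(T^2)\|_A + w_A(T)\bigl|\|\Re_A(T)\|_A-\|\Im_A(T)\|_A\bigr|,
\end{eqnarray*}
while the right-hand side becomes $2w_A^2(T)$. Dividing by $2$ yields exactly the claimed inequality. I do not foresee any genuine obstacle; the entire content lies in the four short identities above, each of which follows directly from the definitions and from $\widetilde{T^\sharp}=(\widetilde{T})^*$ in Lemma \ref{a2}. The only point deserving a brief verification is $(T^\sharp)^2=(T^2)^\sharp$, which rests on the uniqueness of the $A$-adjoint together with the range condition $\mathcal{R}(T^\sharp)\subseteq\overline{\mathcal{R}(A)}$.
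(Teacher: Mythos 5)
Your proposal is correct and follows exactly the route the paper intends: substitute $B=T$, $C=T^\sharp$ into Theorem \ref{th1} and simplify via $\langle T^\sharp x,x\rangle_A=\overline{\langle Tx,x\rangle_A}$, the $A$-Cartesian decomposition, and $(T^2)^\sharp=(T^\sharp)^2$. The paper states the corollary without writing out these verifications, and all four identities you supply are accurate.
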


To prove our next theorem, we need the following lemma, known as Bohr's inequality.
\begin{lemma} \cite{V}.
	Suppose $a_i\geq 0$ for $ i=1,2,......,n.$ Then $$ \left(\sum _{i=1}^{k} a_i\right)^r=k^{r-1}\sum_{i=1}^{k} a_i^r \,\,\, \textit{for} \,\,\,r\geq1.$$ 
	\label{b2}\end{lemma}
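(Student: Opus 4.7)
The statement is evidently meant to be the inequality
\[
\left(\sum_{i=1}^{k} a_i\right)^r \leq k^{r-1}\sum_{i=1}^{k} a_i^r \quad \text{for } r\geq 1,
\]
(the ``$=$'' in the displayed formula appears to be a typographical slip for ``$\leq$'', since otherwise the statement is false already for $k=2$, $r=2$, $a_1=a_2=1$). My plan is to derive this as a direct consequence of the convexity of the function $t\mapsto t^r$ on $[0,\infty)$ for $r\geq 1$.

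The first step is to invoke Jensen's inequality for the convex function $\varphi(t)=t^r$ applied to the uniform probability measure on the $k$ points $a_1,\ldots,a_k$, which yields
\[
\left(\frac{1}{k}\sum_{i=1}^{k} a_i\right)^r \;\leq\; \frac{1}{k}\sum_{i=1}^{k} a_i^r.
\]
The second step is to multiply through by $k^r$ and rearrange, producing the desired bound $\left(\sum_{i=1}^{k} a_i\right)^r \leq k^{r-1}\sum_{i=1}^{k} a_i^r$.

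An equivalent approach, in case one prefers not to quote Jensen's inequality directly, is to apply H\"older's inequality with conjugate exponents $r$ and $r/(r-1)$ to the vectors $(a_1,\ldots,a_k)$ and $(1,\ldots,1)$:
\[
\sum_{i=1}^{k} a_i \;=\; \sum_{i=1}^{k} 1\cdot a_i \;\leq\; \left(\sum_{i=1}^{k} 1\right)^{(r-1)/r}\left(\sum_{i=1}^{k} a_i^r\right)^{1/r} \;=\; k^{(r-1)/r}\left(\sum_{i=1}^{k} a_i^r\right)^{1/r},
\]
and then raise both sides to the $r$-th power. Either route is routine; there is no genuine obstacle in the argument, but if forced to identify the subtlest point, it is verifying that the inequality becomes an equality when $a_1=\cdots=a_k$, which confirms the constant $k^{r-1}$ is sharp and cannot be improved.
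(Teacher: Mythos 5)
Your proof is correct, and your diagnosis of the typographical error is also correct: the ``$=$'' must be ``$\leq$'', as the counterexample $k=2$, $r=2$, $a_1=a_2=1$ shows (the left side is $4$, the right side is... also $4$ in that case, but $a_1=1, a_2=0$ gives $1$ versus $2$, so equality fails in general). The paper does not prove this lemma at all --- it is stated as a known result with a citation to Vasi\'c--Ke\v{c}ki\'c --- so there is no argument of the authors' to compare against; either of your two routes (Jensen's inequality for the convex function $t\mapsto t^r$, or H\"older with exponents $r$ and $r/(r-1)$ against the all-ones vector) is a complete and standard proof, and your remark that equality at $a_1=\cdots=a_k$ shows the constant $k^{r-1}$ is sharp is a worthwhile addition.
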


\begin{theorem}
	If $ B,C\in\mathbb{B}_A(\mathscr{H}),$ then $$\frac{1}{8}\|B+C\|_A^4\leq  w_{A,e}(B^\sharp B,C^\sharp C)w_{A,e}(BB^\sharp,CC^\sharp).$$
\end{theorem}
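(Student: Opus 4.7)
The strategy is to control $\|B+C\|_A$ through the dual formula
$$\|B+C\|_A = \sup\bigl\{|\langle (B+C)x, y\rangle_A| : x, y \in \mathbb{S}_{\|\cdot\|_A}\bigr\}$$
recorded in the introduction, and to bound $|\langle (B+C)x, y\rangle_A|^4$ pointwise by an expression that splits over $x$ and $y$ and produces the two Euclidean radii on the right-hand side. First I apply the triangle inequality $|\langle (B+C)x, y\rangle_A| \leq |\langle Bx, y\rangle_A| + |\langle Cx, y\rangle_A|$, and for each summand I combine the $A$-adjoint identity $\langle Tx, y\rangle_A = \langle x, T^\sharp y\rangle_A$ with two applications of the semi-inner-product Cauchy--Schwarz inequality to obtain the mixed bound
$$|\langle Tx, y\rangle_A|^2 \;=\; |\langle Tx, y\rangle_A| \cdot |\langle x, T^\sharp y\rangle_A| \;\leq\; \|Tx\|_A \, \|T^\sharp y\|_A$$
for $T = B, C$ and $\|x\|_A = \|y\|_A = 1$.

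Next I run three rounds of the elementary inequality $(u+v)^2 \leq 2(u^2 + v^2)$ (the $r = 2,\, k = 2$ case of Lemma \ref{b2}) interleaved with one round of the standard Cauchy--Schwarz inequality in $\mathbb{R}^2$, in the following order: (a) apply $(u+v)^2 \le 2(u^2+v^2)$ to the squared triangle estimate to separate the $B$-term from the $C$-term, yielding $|\langle (B+C)x, y\rangle_A|^2 \le 2(\|Bx\|_A\|B^\sharp y\|_A + \|Cx\|_A\|C^\sharp y\|_A)$; (b) square once more and apply Cauchy--Schwarz in $\mathbb{R}^2$ to split the cross sum into a product $(\|Bx\|_A^2 + \|Cx\|_A^2)(\|B^\sharp y\|_A^2 + \|C^\sharp y\|_A^2)$; (c) apply $(u+v)^2 \le 2(u^2+v^2)$ to each of these two factors to convert them into square roots of sums of fourth powers. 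Collecting the constants yields
$$|\langle (B+C)x, y\rangle_A|^4 \leq 8 \sqrt{\|Bx\|_A^4 + \|Cx\|_A^4} \cdot \sqrt{\|B^\sharp y\|_A^4 + \|C^\sharp y\|_A^4}.$$

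To conclude, I identify $\|Bx\|_A^2 = \langle B^\sharp B x, x\rangle_A$ and $\|B^\sharp y\|_A^2 = \langle BB^\sharp y, y\rangle_A$ (the latter via $\langle BB^\sharp y, y\rangle_A = \langle B^\sharp y, B^\sharp y\rangle_A$, i.e. the very definition of $B^\sharp$), and similarly for $C$ and $C^\sharp$; all four quantities are nonnegative reals, so the absolute values in the definition of $w_{A,e}$ are harmless. Taking the supremum over $\|x\|_A = 1$ on the first factor and over $\|y\|_A = 1$ on the second factor of the right-hand side independently, and then the supremum over both variables on the left-hand side, produces
$$\|B+C\|_A^4 \leq 8\, w_{A,e}(B^\sharp B, C^\sharp C) \cdot w_{A,e}(BB^\sharp, CC^\sharp),$$
which is the claim after dividing by $8$. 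No step presents a real obstacle; the only point requiring vigilance is the bookkeeping of constants through the cascade of elementary inequalities so that the final factor on the left is exactly $1/8$.
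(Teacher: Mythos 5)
Your proof is correct, and it takes a genuinely different route from the paper. The paper first establishes the inequality for ordinary Hilbert--space operators, using Kittaneh's mixed Schwarz inequality (Lemma \ref{lem1}) followed by the H\"{o}lder--McCarthy inequality (Lemma \ref{a3}) to reach $|\langle Tx,y\rangle|^4\leq\langle T^*Tx,x\rangle\langle TT^*y,y\rangle$, and then transfers the result to the semi-Hilbertian setting via the canonical map $T\mapsto\widetilde{T}$ onto $\mathbf{R}(A^{1/2})$ (Proposition \ref{p10}, Lemma \ref{a2}, Theorem \ref{lemm1}). You instead work entirely inside the semi-Hilbertian space: your identity $|\langle Tx,y\rangle_A|^2=|\langle Tx,y\rangle_A|\,|\langle x,T^{\sharp}y\rangle_A|\leq\|Tx\|_A\|T^{\sharp}y\|_A$ reaches the same intermediate quantity $\|Tx\|_A^2\|T^{\sharp}y\|_A^2$ with nothing more than the defining property of $T^{\sharp}$ and the Cauchy--Schwarz inequality for the positive semi-definite form $\langle\cdot,\cdot\rangle_A$, and your constant bookkeeping ($4\cdot\sqrt{2}\cdot\sqrt{2}=8$) is right, as are the identifications $\|Bx\|_A^2=\langle B^{\sharp}Bx,x\rangle_A$ and $\|B^{\sharp}y\|_A^2=\langle BB^{\sharp}y,y\rangle_A$. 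What your approach buys is self-containedness: it avoids the operator-monotonicity input of H\"{o}lder--McCarthy, the mixed Schwarz inequality, and the density argument hidden in the transfer step, at the cost of not illustrating the $\widetilde{T}$ machinery that the paper reuses elsewhere. The only point worth making explicit in a final write-up is that $B^{\sharp}B$, $C^{\sharp}C$, $BB^{\sharp}$, $CC^{\sharp}$ lie in $\mathbb{B}_A(\mathscr{H})\subseteq\mathbb{B}_{A^{1/2}}(\mathscr{H})$, so that the two Euclidean radii on the right-hand side are well defined, and that $B+C\in\mathbb{B}_{A^{1/2}}(\mathscr{H})$ justifies the dual formula for $\|B+C\|_A$ you start from.
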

\begin{proof}
	Let $x,y \in \mathscr{H}$ with $\|x\|=\|y\|=1$. Then we have,
	\begin{eqnarray*}
		&& |\langle(B+C)x,y\rangle|^4\\ &=&|\langle Bx,y\rangle+\langle Cx,y\rangle|^4\\&\leq& (|\langle Bx,y\rangle|+|\langle Cx,y\rangle|)^4\\&\leq& 8(|\langle Bx,y\rangle|^4+|\langle Cx,y\rangle|^4)\,\,(\textit{using Lemma \ref{b2})}\\&\leq& 8(\langle|B|x,x\rangle^2\langle |B^*|y,y \rangle^2+\langle|C|)x,x\rangle^2\langle|C^*|y,y\rangle^2)\,\,(\textit{using Lemma \ref{lem1} })\\ &\leq& 8(\langle B^*B x,x\rangle\langle BB^* y,y \rangle+\langle C^*C x,x\rangle \langle CC^* y,y\rangle)\,\,(\textit{using Lemma \ref{a3} })\\ &\leq& 8(\langle B^*B x,x\rangle^2+\langle C^*C x,x\rangle^2)^\frac12(\langle BB^* y,y \rangle^2+\langle CC^* y,y\rangle^2)^\frac12\\
		&\leq& 8 w_e(B^*B,C^*C)w_e(BB^*,CC^*).
	\end{eqnarray*}
	Taking supremum over $\|x\|=\|y\|=1$, we get \begin{eqnarray}
		\frac18\|B+C\|^4\leq  w_e(B^*B,C^*C)w_e(BB^*,CC^*).\label{eqn19}\end{eqnarray}
	As $B,C\in \mathbb{B}_{A^{1/2}}(\mathscr{H})$, following Proposition \ref{p10}, there exist unique $\widetilde{B}$ and $\widetilde{C}$ in $\mathbb{B}(\mathbf{R}(A^{1/2}))$
	such that $ Z_AB=\widetilde{B}Z_A $ and  $ Z_AC=\widetilde{C}Z_A $. 
	The inequality (\ref{eqn19}) implies that\begin{eqnarray}
		\frac18\|\widetilde{B}+\widetilde{C}\|_{\mathbb{B}(\mathbf{R(A^{1/2}))}}^4\leq  w_e(\widetilde{B}^*\widetilde{B},\widetilde{C}^*\widetilde{C})w_e(\widetilde{B}\widetilde{B}^*,\widetilde{C}\widetilde{C}^*).\label{eqn20}
	\end{eqnarray}
	Since $ (\widetilde{B})^*=\widetilde{B^\sharp }$,  the inequality (\ref{eqn20}) becomes
	\begin{eqnarray}
		\frac18\|\widetilde{B}+\widetilde{C}\|_{\mathbb{B}(\mathbf{R(A^{1/2}))}}^4\leq  w_e(\widetilde{B}^\sharp \widetilde{B},\widetilde{C}^\sharp \widetilde{C})w_e(\widetilde{B}\widetilde{B}^\sharp,\widetilde{C}\widetilde{C}^\sharp),\label{eqn21}
	\end{eqnarray}
that is,
	\begin{eqnarray}
		\frac18\|\widetilde{B+C}\|_{\mathbb{B}(\mathbf{R(A^{1/2}))}}^4\leq  w_e(\widetilde{B^\sharp B},\widetilde{C^\sharp C})w_e(\widetilde{BB^\sharp},\widetilde{CC^\sharp}).\label{eqn22}
	\end{eqnarray}
By using Lemma \ref{a2} and Theorem \ref{lemm1} in the above inequality (\ref{eqn22}), we obtain
$$ \frac{1}{8}\|B+C\|_A^4\leq  w_{A,e}(B^\sharp B,C^\sharp C)w_{A,e}(BB^\sharp,CC^\sharp),$$ as desired.
\end{proof}

 Next we obtain an upper bound for the $A$-Euclidean operator radius of $2$-tuple operators admitting $A$-adjoint. First we need the following proposition.

\begin{proposition}
Let $x\in\mathscr{H}$ with $\|x\|_A=1.$ 
	Suppose that  $ T=x\otimes_A x $, is defined as $Tz= (x\otimes x)z=\langle z,x\rangle_A x$ ,\hspace{0.1cm} $\forall z\in\mathscr{H}.$ Then we have
	 $$ |\alpha-1|\leq \|\alpha T-I\|_A\leq \max \left\lbrace 1, |\alpha-1|\right\rbrace, $$ for all $\alpha\in\mathbb{C}.$ Moreover, if $|\alpha-1|\geq 1$, then 
	 $\|\alpha T-I\|_A= |\alpha-1|.$
\label{lemma1}
\end{proposition}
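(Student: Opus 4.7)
The plan is to exploit the fact that $T = x \otimes_A x$ behaves like an $A$-orthogonal rank-one projection onto the $A$-span of $x$. First I would verify the elementary identities $Tx = \langle x,x\rangle_A\, x = x$ and $T^2 = T$ directly from the definition; in particular $x$ is a unit eigenvector of $T$ with eigenvalue $1$.

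For the lower bound, I would simply test $\alpha T - I$ on the vector $x$ itself: since $\|x\|_A = 1$ and $(\alpha T - I)x = (\alpha - 1)x$, we get $\|(\alpha T - I)x\|_A = |\alpha - 1|$, which forces $\|\alpha T - I\|_A \geq |\alpha - 1|$ immediately from the definition of the $A$-operator seminorm.

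For the upper bound, take an arbitrary $z \in \mathscr{H}$ with $\|z\|_A = 1$ and decompose $z = \lambda x + y$ where $\lambda = \langle z, x\rangle_A$ and $y = z - \lambda x$. A direct computation gives $\langle y, x\rangle_A = \langle z, x\rangle_A - \lambda \langle x,x\rangle_A = 0$, so Pythagoras in the $A$-seminorm (which follows from expanding $\|\lambda x + y\|_A^2$ and using $A$-orthogonality) yields $1 = \|z\|_A^2 = |\lambda|^2 + \|y\|_A^2$. Applying $T$ to the decomposition gives $Tz = \lambda x$, hence $(\alpha T - I) z = \lambda(\alpha - 1) x - y$, and another application of Pythagoras produces
\begin{equation*}
\|(\alpha T - I) z\|_A^2 = |\lambda|^2 |\alpha - 1|^2 + \|y\|_A^2 = 1 + |\lambda|^2\bigl(|\alpha - 1|^2 - 1\bigr).
\end{equation*}

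Since $|\lambda|^2 \in [0, 1]$, the right-hand side is at most $\max\{1, |\alpha - 1|^2\}$, taking the supremum over $z$ in the $A$-unit sphere gives $\|\alpha T - I\|_A \leq \max\{1, |\alpha - 1|\}$. Finally, when $|\alpha - 1| \geq 1$ the coefficient $|\alpha - 1|^2 - 1$ is nonnegative, so the supremum on the right is attained at $|\lambda| = 1$ (i.e., $z = x$), and combined with the lower bound this yields $\|\alpha T - I\|_A = |\alpha - 1|$. I do not anticipate any real obstacle; the only point requiring care is the $A$-Pythagoras identity, but this follows from a one-line expansion of $\langle A(\lambda x + y), \lambda x + y\rangle$.
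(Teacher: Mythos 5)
Your proof is correct and rests on exactly the same key identity as the paper, namely $\|(\alpha T-I)z\|_A^2=\|z\|_A^2+|\langle z,x\rangle_A|^2\left(|\alpha-1|^2-1\right)$; the paper obtains it by directly expanding the $A$-inner product, while you obtain it via the $A$-orthogonal decomposition $z=\lambda x+y$, which is only a cosmetic difference. Your lower bound, which simply observes that $x$ is an $A$-unit eigenvector of $\alpha T-I$ with eigenvalue $\alpha-1$, is marginally more direct than the paper's rearrangement of that same identity, but the two arguments are essentially identical.
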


\begin{proof}
For any $z\in \mathscr{H},$ we have 
\begin{eqnarray}
\nonumber\| (\alpha T - I)z\|_A^2&=& \langle (\alpha T - I)z, ˘(\alpha T - I)z \rangle _A\\&=&\nonumber |\alpha|^2\|Tz\|_A^2 -\alpha \langle Tz, z\rangle_A -\bar{\alpha} \langle z, Tz \rangle_A+ \|z\|_A^2\\&=& \nonumber |\langle z, x \rangle_A|^2( |\alpha|^2-\alpha -\bar{\alpha})+ \|z\|_A^2\\&=& |\langle z, x \rangle_A|^2( |\alpha-1|^2-1)+ \|z\|_A^2 \label{equation1}\\
&\leq & \max \{1,|\alpha-1|^2 \} \|z\|_A^2. 
\end{eqnarray}
Taking supremum over $\|z\|_A=1,$ we have
$$ \| \alpha T - I \|_A  \leq  \max \{1,|\alpha-1|\}.$$
Again, from the equation  (\ref{equation1}) we have,
$$\|(\alpha T - I)z\|_A^2 + |\langle z, x \rangle_A|^2= |\langle z, x \rangle_A|^2 |\alpha-1|^2+  \|z\|_A^2.$$
This implies that
$$\|(\alpha T - I)z\|_A \geq  |\langle z, x \rangle_A| |\alpha-1|.$$
 Taking supremum over $\|z\|_A=1,$ we get
 $$ \|\alpha T - I\|\geq \sup_{\|z\|_A=1}|\langle z, x \rangle_A| |\alpha-1| \geq |\alpha-1| .$$
 This completes the proof. 
 \end{proof} 

By using the above proposition we obtain a generalization of Buzano's inequality (\cite{Buz}), in the setting of a  semi-Hilbertian space.
 
\begin{lemma}
If $ x,y,e \in\mathscr{H} $ with $\|e\|_A=1 $, then $$ \mid\langle x,e\rangle_A\langle e,y\rangle_A\mid\leq\frac{\mid\langle x,y\rangle_A \mid +\max\{1,|\alpha -1|\}\|x\|_A\|y\|_A}{|\alpha|},$$
for all non-zero scalar $\alpha.$
\label{lem11}
\end{lemma}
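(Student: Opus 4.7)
The plan is to mimic the standard proof of Buzano's inequality, but carried out entirely inside the semi-Hilbertian framework, using the rank-one-type operator $T = e \otimes_A e$ introduced in Proposition \ref{lemma1}. The crucial observation is the identity
\begin{equation*}
\langle Tx, y\rangle_A = \langle \langle x, e\rangle_A\, e,\, y\rangle_A = \langle x, e\rangle_A \langle e, y\rangle_A,
\end{equation*}
so the quantity we wish to bound is nothing but $|\langle Tx, y\rangle_A|$.

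First I would write, for any non-zero $\alpha \in \mathbb{C}$, the algebraic identity
\begin{equation*}
\alpha \langle Tx, y\rangle_A = \langle (\alpha T - I)x, y\rangle_A + \langle x, y\rangle_A,
\end{equation*}
which follows at once from the linearity of $\langle \cdot,\cdot\rangle_A$ in the first slot. Taking moduli and applying the triangle inequality yields
\begin{equation*}
|\alpha|\,|\langle Tx, y\rangle_A| \leq |\langle (\alpha T - I)x, y\rangle_A| + |\langle x, y\rangle_A|.
\end{equation*}

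Next, I would apply Cauchy--Schwarz for the positive semi-definite form $\langle \cdot,\cdot\rangle_A$, namely $|\langle u, v\rangle_A| \leq \|u\|_A \|v\|_A$, to the first term on the right-hand side. Since $T$ clearly lies in $\mathbb{B}_{A^{1/2}}(\mathscr{H})$ (as $\|Tz\|_A = |\langle z, e\rangle_A| \leq \|z\|_A$), so does $\alpha T - I$, and hence
\begin{equation*}
|\langle (\alpha T - I)x, y\rangle_A| \leq \|(\alpha T - I)x\|_A \|y\|_A \leq \|\alpha T - I\|_A \|x\|_A \|y\|_A.
\end{equation*}
Now invoking Proposition \ref{lemma1}, we have $\|\alpha T - I\|_A \leq \max\{1, |\alpha - 1|\}$, and combining everything and dividing by $|\alpha|$ delivers exactly the claimed inequality.

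There is no real obstacle here: the proof is essentially a direct application of Proposition \ref{lemma1} together with Cauchy--Schwarz, and the only minor point to keep in mind is to justify that $\alpha T - I$ belongs to $\mathbb{B}_{A^{1/2}}(\mathscr{H})$ so that the $A$-seminorm bound $\|(\alpha T - I)x\|_A \leq \|\alpha T - I\|_A\|x\|_A$ is legitimate. The overall structure parallels the classical Buzano argument, with the single substantive modification being that the operator norm bound $\|\alpha T - I\| \leq \max\{1,|\alpha-1|\}$ from the Hilbert space case is replaced by its $A$-seminorm analogue established in the preceding proposition.
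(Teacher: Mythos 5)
Your proposal is correct and follows essentially the same route as the paper: both reduce the claim to bounding $|\langle(\alpha T-I)x,y\rangle_A|$ for $T=e\otimes_A e$, apply the Cauchy--Schwarz inequality for $\langle\cdot,\cdot\rangle_A$ together with the bound $\|\alpha T-I\|_A\leq\max\{1,|\alpha-1|\}$ from Proposition \ref{lemma1}, and finish with the triangle inequality. Your explicit check that $\alpha T-I\in\mathbb{B}_{A^{1/2}}(\mathscr{H})$ is a small additional justification the paper leaves implicit.
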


\begin{proof}
	Suppose that $T=e\otimes_A e $. Then we have,
	\begin{eqnarray*}
\mid \alpha \langle x,e\rangle _A \langle e, y\rangle_A-\langle x, y\rangle_A\mid &=& \mid \alpha \langle Tx, y\rangle_A - \langle x, y\rangle_A\mid \\ &=& \mid \langle (\alpha T-I)x,y\rangle_A\mid \\ &\leq & \| \alpha T-I\|_A \|x\|_A\|y\|_A\\&\leq& \max \{ 1,|\alpha-1|\}\|x\|_A\|y\|_A\,\, \textit{(by Proposition \ref{lemma1})}. 
\end{eqnarray*}
This gives that
 \begin{eqnarray*}
\mid \alpha \langle x,e\rangle _A \langle e, y\rangle_A\mid\leq \{ 1, |\alpha-1|\}\|x\|_A\|y\|_A+|\langle  x,y\rangle_A|.
\end{eqnarray*}
This completes the proof. 
\end{proof}

Note that the inequality in Lemma \ref{lem11} was studied (for the case $A=I$) in \cite[Cor. 2.5]{MKD}, using different approaches.
In particular, for $\alpha=2$ in Lemma \ref{lem11}, we have 
\begin{eqnarray}
	\mid  \langle x,e\rangle _A \langle e, y\rangle_A\mid\leq \frac{\|x\|_A\|y\|_A+|\langle  x,y\rangle_A|}{2},
\end{eqnarray}
which was  also obtained in \cite{OAM2021}.

%......................................................................

Now, by using Lemma \ref{lem11} we obtain the following upper bound for $A$-Euclidean operator radius.

\begin{theorem}
If $ B,C\in\mathbb{B}_A(\mathscr{H})$, then $$ w_{A,e}^2(B,C)\leq\frac{\max\{ 1,|1-\alpha| \} \|(B,C)\|_{A,e} \|(B^\sharp,C^\sharp)\|_{A,e} +w_A(B^2)+w_A(C^2)}{|\alpha|},$$
for any non-zero scalar $\alpha.$ 
\label{theorem1}\end{theorem}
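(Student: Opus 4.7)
The strategy is to bound each summand $|\langle Bx,x\rangle_A|^2$ and $|\langle Cx,x\rangle_A|^2$ separately via the Buzano-type inequality (Lemma \ref{lem11}), and then combine the two estimates using the Cauchy--Schwarz inequality in $\mathbb{R}^2$. The key preparatory observation is that, for $x\in\mathscr{H}$ with $\|x\|_A=1$, the $A$-adjoint identity $\langle Bx,x\rangle_A=\langle x,B^\sharp x\rangle_A$ allows one to write
$$|\langle Bx,x\rangle_A|^2=\bigl|\langle Bx,x\rangle_A\,\langle x,B^\sharp x\rangle_A\bigr|,$$
which has exactly the shape $|\langle u,e\rangle_A\langle e,v\rangle_A|$ required by Lemma \ref{lem11}, with $u=Bx$, $v=B^\sharp x$ and $e=x$.

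Applying Lemma \ref{lem11} then yields
$$|\langle Bx,x\rangle_A|^2\leq\frac{|\langle Bx,B^\sharp x\rangle_A|+\max\{1,|1-\alpha|\}\,\|Bx\|_A\|B^\sharp x\|_A}{|\alpha|},$$
and the analogous bound holds with $C$ in place of $B$. To control $|\langle Bx,B^\sharp x\rangle_A|$, I will use the $A$-adjoint identity once more to obtain $\langle Bx,B^\sharp x\rangle_A=\langle x,(B^\sharp)^2 x\rangle_A$, then invoke the standard identity $(B^\sharp)^2=(B^2)^\sharp$ (a consequence of $(ST)^\sharp=T^\sharp S^\sharp$) together with the elementary equality $w_A(T^\sharp)=w_A(T)$ (which follows by taking complex conjugates inside $|\cdot|$). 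This gives
$$|\langle Bx,B^\sharp x\rangle_A|\leq w_A\bigl((B^\sharp)^2\bigr)=w_A(B^2),$$
and similarly $|\langle Cx,C^\sharp x\rangle_A|\leq w_A(C^2)$.

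Adding the two refined bounds and applying the two-dimensional Cauchy--Schwarz inequality
$$\|Bx\|_A\|B^\sharp x\|_A+\|Cx\|_A\|C^\sharp x\|_A\leq\bigl(\|Bx\|_A^2+\|Cx\|_A^2\bigr)^{1/2}\bigl(\|B^\sharp x\|_A^2+\|C^\sharp x\|_A^2\bigr)^{1/2}$$
converts the residual sum of products of norms into the product of two $A$-Euclidean operator seminorms, for $(B,C)$ and $(B^\sharp,C^\sharp)$ respectively. Taking the supremum over $x\in\mathbb{S}_{\|\cdot\|_A}$ then delivers the claimed inequality. The only mildly delicate step is the initial Buzano reshaping: once one sees that $|\langle Bx,x\rangle_A|^2$ can be recast as a product of two semi-inner products of the required form via $\langle Bx,x\rangle_A=\langle x,B^\sharp x\rangle_A$, the remaining steps amount to routine applications of semi-Hilbertian identities.
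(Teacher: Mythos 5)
Your proposal is correct and follows essentially the same route as the paper: rewrite $|\langle Bx,x\rangle_A|^2$ as $|\langle Bx,x\rangle_A\langle x,B^\sharp x\rangle_A|$, apply Lemma \ref{lem11} with $e=x$, control the residual inner products by $w_A(B^2)$ and $w_A(C^2)$, and finish with the two-dimensional Cauchy--Schwarz inequality and a supremum over the $A$-unit sphere. The only cosmetic difference is that the paper reaches $|\langle Bx,B^\sharp x\rangle_A|\le w_A(B^2)$ directly via $\langle B^2x,x\rangle_A=\langle Bx,B^\sharp x\rangle_A$, whereas you detour through $(B^\sharp)^2=(B^2)^\sharp$ and $w_A(T^\sharp)=w_A(T)$; both are valid.
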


\begin{proof}
Let $x\in \mathscr{H}$ with $\|x\|_A=1$. Then we have,
\begin{eqnarray*}
&&|\left\langle  Bx,x\right\rangle_A|^2+|\left\langle Cx,x\right\rangle_A|^2\\
&=& |\langle Bx,x \rangle_A\langle x,B^\sharp x \rangle_A|+|\langle Cx,x \rangle_A\langle x,C^\sharp x \rangle_A|\\
&\leq& \frac{\max\{ 1, |\alpha-1|\} \|Bx\|_A\|B^\sharp x\|_A + |\langle Bx, B^\sharp x\rangle_A|}{|\alpha|}\\
&& +\frac{\max\{ 1, |\alpha-1|\} \|Cx\|_A\|C^\sharp x\|_A + |\langle Cx, C^\sharp x\rangle_A|}{|\alpha|}\,\,(\textit{using Lemma \ref{lem11}})\\
&=& \frac{\max\{ 1, |\alpha-1|\}( \|Bx\|_A\|B^\sharp x\|_A+\|Cx\|_A\|C^\sharp x\|_A)}{|\alpha|}\\
&& +\frac{|\langle Bx, B^\sharp x\rangle_A|+|\langle Cx, C^\sharp x\rangle_A|}{|\alpha|}\\
&\leq& \frac{\max\{ 1, |\alpha-1|\}( \|Bx\|^2_A+\|Cx\|^2_A)^\frac{1}{2}(\|B^\sharp x\|^2_A+\|C^\sharp x\|^2_A)^\frac{1}{2}}{|\alpha|}\\
&&+\frac{|\langle B^2x, x\rangle_A|+|\langle C^2x,  x\rangle_A|}{|\alpha|}\\
&\leq& \frac{\max\{ 1, |\alpha-1|\}\|(B,C)\|_{A,e}\|(B^\sharp ,C^\sharp)\|_{A,e}}{|\alpha|}+\frac{w_A(B^2)+w_A(C^2)}{|\alpha|}.
\end{eqnarray*}
Taking supremum over all $x\in \mathscr{H}$ with $\|x\|_A=1$, we get the desired inequality.
\end{proof}
 
 In particular, considering $ B=C=T $ in  Theorem \ref{theorem1}, we obtain the following corollary.
 
\begin{cor}
If $ T\in\mathbb{B}_A(\mathscr{H}) $, then
 $$ w_A^2(T)\leq \frac{\max\{1,|1-\alpha|\}\|T\|^2_A+w_A(T^2)}{|\alpha|},$$
 for any non-zero scalar $\alpha$.
\end{cor}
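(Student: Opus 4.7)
The plan is to obtain this corollary by direct substitution $B=C=T$ into Theorem \ref{theorem1} and simplifying the three quantities $w_{A,e}^2(T,T)$, $\|(T,T)\|_{A,e}$ and $\|(T^\sharp,T^\sharp)\|_{A,e}$ that appear. Since the upper bound in Theorem \ref{theorem1} is homogeneous of degree $2$ in the tuple $(B,C)$, one should expect a factor $2$ on the left-hand side (from the two copies of $|\langle Tx,x\rangle_A|^2$) to cancel against a factor $2$ on the right-hand side (from $(\sqrt{2})(\sqrt{2})$ in the product of the two Euclidean seminorms plus the doubling of $w_A(T^2)$).

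In detail, first I would compute from the definitions
\begin{eqnarray*}
w_{A,e}^2(T,T) &=& \sup_{\|x\|_A=1}\bigl(|\langle Tx,x\rangle_A|^2+|\langle Tx,x\rangle_A|^2\bigr) \;=\; 2 w_A^2(T),\\
\|(T,T)\|_{A,e} &=& \sup_{\|x\|_A=1}\bigl(\|Tx\|_A^2+\|Tx\|_A^2\bigr)^{1/2} \;=\; \sqrt{2}\,\|T\|_A,
\end{eqnarray*}
and analogously $\|(T^\sharp,T^\sharp)\|_{A,e}=\sqrt{2}\,\|T^\sharp\|_A$. Next I would invoke the identity $\|T^\sharp\|_A=\|T\|_A$, which follows immediately from Lemma \ref{a2}: $\|T^\sharp\|_A=\|\widetilde{T^\sharp}\|_{\mathbb{B}(\mathbf{R}(A^{1/2}))}=\|(\widetilde{T})^*\|_{\mathbb{B}(\mathbf{R}(A^{1/2}))}=\|\widetilde{T}\|_{\mathbb{B}(\mathbf{R}(A^{1/2}))}=\|T\|_A$.

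Plugging these into Theorem \ref{theorem1} yields
\begin{eqnarray*}
2 w_A^2(T) \;\leq\; \frac{\max\{1,|1-\alpha|\}\cdot\sqrt{2}\|T\|_A\cdot\sqrt{2}\|T\|_A + w_A(T^2)+w_A(T^2)}{|\alpha|},
\end{eqnarray*}
and dividing through by $2$ gives the claimed bound. There is essentially no obstacle here: the proof is a direct specialization, and the only non-bookkeeping input is the norm identity $\|T^\sharp\|_A=\|T\|_A$, which is immediate from the machinery already developed. I would therefore present the corollary with a one-line justification pointing to Theorem \ref{theorem1}, the computations above, and Lemma \ref{a2}.
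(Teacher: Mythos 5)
Your proposal is correct and is exactly the paper's route: the corollary is obtained by setting $B=C=T$ in Theorem \ref{theorem1}, and your bookkeeping ($w_{A,e}^2(T,T)=2w_A^2(T)$, $\|(T,T)\|_{A,e}=\sqrt{2}\|T\|_A$, and $\|T^\sharp\|_A=\|T\|_A$ via Lemma \ref{a2}) supplies precisely the details the paper leaves implicit.
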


For $\alpha=2$,  $$ w_A^2(T)\leq \frac{1}{2} \left(\|T\|^2_A+w_A(T^2)\right),$$ which was also obtained in \cite[Cor. 2.5]{kf1}.

Next bound reads as follows:

\begin{theorem}
If $ B,C\in\mathbb{B}_A(\mathscr{H}),$  then 
\begin{eqnarray*}
	w^2_{A,e}(B,C) &\leq& \min\{ w_A^2(B-C),w_A^2(B+C)\}\\
	&&+\frac{\max\{ 1, |1-\alpha|\}\|C^\sharp C+BB^\sharp\|_A+2 w_A(BC)}{|\alpha|},
\end{eqnarray*} for any non-zero scalar $\alpha.$ 
\label{th3}\end{theorem}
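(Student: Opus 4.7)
The strategy is to combine a parallelogram-type estimate that introduces $w_A(B\pm C)$ with a carefully set up application of the generalized Buzano inequality (Lemma \ref{lem11}) to extract the remaining terms $w_A(BC)$ and $\|C^\sharp C+BB^\sharp\|_A$.

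Fix $x\in\mathscr{H}$ with $\|x\|_A=1$ and set $u=\langle Bx,x\rangle_A$, $v=\langle Cx,x\rangle_A$. From the identity $|u|^2+|v|^2=|u\pm v|^2\mp 2\,\Re(u\bar v)\le |u\pm v|^2+2|u||v|$ one obtains
\[
|\langle Bx,x\rangle_A|^2+|\langle Cx,x\rangle_A|^2\le |\langle (B\pm C)x,x\rangle_A|^2+2\,|\langle Bx,x\rangle_A|\,|\langle Cx,x\rangle_A|,
\]
so after taking supremum the first term on the right will contribute $w_A^2(B\pm C)$; it remains to estimate the cross product.

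For the cross product, I use the $A$-adjoint identity $\langle Bx,x\rangle_A=\langle x,B^\sharp x\rangle_A$ to rewrite $\langle Bx,x\rangle_A\langle Cx,x\rangle_A=\langle Cx,x\rangle_A\langle x,B^\sharp x\rangle_A$, which has exactly the shape required by Lemma \ref{lem11} with $e=x$ (so $\|e\|_A=1$), $x_\star=Cx$ and $y_\star=B^\sharp x$. The lemma yields
\[
|\langle Bx,x\rangle_A|\,|\langle Cx,x\rangle_A|\le \frac{|\langle Cx,B^\sharp x\rangle_A|+\max\{1,|1-\alpha|\}\,\|Cx\|_A\,\|B^\sharp x\|_A}{|\alpha|}.
\]
Now $\langle Cx,B^\sharp x\rangle_A=\langle x,C^\sharp B^\sharp x\rangle_A=\langle x,(BC)^\sharp x\rangle_A=\langle BCx,x\rangle_A$ by successive applications of the $A$-adjoint relation together with $(BC)^\sharp=C^\sharp B^\sharp$, so the first term in the numerator is bounded (after sup) by $w_A(BC)$. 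For the second term, from $\|Cx\|_A^2=\langle C^\sharp Cx,x\rangle_A$ and $\|B^\sharp x\|_A^2=\langle BB^\sharp x,x\rangle_A$ the arithmetic-geometric mean inequality yields $\|Cx\|_A\|B^\sharp x\|_A\le \tfrac12\langle (C^\sharp C+BB^\sharp)x,x\rangle_A$.

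Assembling the three estimates, taking the supremum over $\|x\|_A=1$, using that $C^\sharp C+BB^\sharp$ is $A$-positive (hence $A$-selfadjoint, so that its $A$-numerical radius equals $\|C^\sharp C+BB^\sharp\|_A$), and finally minimizing over the two sign choices, one arrives at the claimed inequality. The only delicate point is the bookkeeping in the Buzano step: rewriting the product as $\langle Cx,x\rangle_A\langle x,B^\sharp x\rangle_A$ rather than the more symmetric $\langle Bx,x\rangle_A\langle x,Cx\rangle_A$ is precisely what forces the inner-product term produced by the lemma to be $\langle Cx,B^\sharp x\rangle_A$, which then collapses to $\langle BCx,x\rangle_A$ and matches the operator $BC$ in the statement; had one proceeded with the symmetric form, the resulting operator would have been $B^\sharp C$ and the AM-GM step would have produced $B^\sharp B+C^\sharp C$ in place of $C^\sharp C+BB^\sharp$.
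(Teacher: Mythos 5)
Your argument is correct and follows essentially the same route as the paper: expand $|u\pm v|^2$ to isolate $w_A^2(B\pm C)$ plus a cross term, bound the cross term via the generalized Buzano inequality (Lemma \ref{lem11}) with $e=x$ in exactly the asymmetric form $\langle Cx,x\rangle_A\langle x,B^\sharp x\rangle_A$, then use $\langle Cx,B^\sharp x\rangle_A=\langle BCx,x\rangle_A$ and AM--GM to obtain $w_A(BC)$ and $\|C^\sharp C+BB^\sharp\|_A$. The bookkeeping (including the factor of $2$ absorbed by AM--GM) matches the paper's proof.
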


\begin{proof}
Let $x\in \mathscr{H}$ with $\|x\|_A=1$. Then we have,
\begin{eqnarray*}
|\left\langle Cx,x\right\rangle_A|^2-2Re{[\left\langle Cx,x\right\rangle_A\overline{\left\langle Bx,x\right\rangle_A}]}+|\left\langle Bx,x\right\rangle_A|^2&=&|\left\langle Cx,x\right\rangle_A-\left\langle Bx,x\right\rangle_A|^2\\&=&|\left\langle (C- B)x,x\right\rangle_A|^2\\&\leq& w_A^2(C-B).
\end{eqnarray*}
Thus,
\begin{eqnarray*}
&& |\left\langle Cx,x\right\rangle_A|^2+|\left\langle Bx,x\right\rangle_A|^2\\
&\leq& w_A^2(C-B)+2Re{[\left\langle Cx,x\right\rangle_A\overline{\left\langle Bx,x\right\rangle_A}]}\\&\leq&w_A^2(C-B)+2|\left\langle Cx,x\right\rangle_A\left\langle Bx,x\right\rangle_A|\\&\leq&w_A^2(C-B)+ \frac{ 2\max\{1,|\alpha-1|\}\|Cx\|_A\|B^\sharp x\|_A+2 |\langle Cx,B^\sharp x\rangle_A|}{|\alpha|} \,\,(\textit{by Lemma \ref{lem11}})\\
&\leq& w_A^2(C-B)+\frac{\max\{ 1, |1-\alpha|\}(\|Cx\|_A^2+\|B^\sharp x\|_A^2)+2w_A(BC)}{|\alpha|}\\&\leq& w_A^2(C-B)+\frac{\max\{ 1, |1-\alpha|\}\|C^\sharp C+BB^\sharp\|_A+2w_A(BC)}{|\alpha|}. 
\end{eqnarray*}
Taking supremum over all $x\in \mathscr{H}$ with $\|x\|_A=1$, we get 
\begin{eqnarray}\label{p6}
	w^2_{A,e}(B,C) \leq w_A^2(B-C)+\frac{\max\{ 1, |1-\alpha|\}\|C^\sharp C+BB^\sharp\|_A+2w_A(BC)}{|\alpha|}. 
\end{eqnarray}
Replacing $C$ by $-C$, we obtain that 
\begin{eqnarray}\label{p7}
	w^2_{A,e}(B,C) \leq w_A^2(B+C)+\frac{\max\{ 1, |1-\alpha|\}\|C^\sharp C+BB^\sharp\|_A+2 w_A(BC)}{|\alpha|}. 
\end{eqnarray}
Following the inequality \eqref{p7} together with \eqref{p6}, we get the desired inequality.
\end{proof}

In particular, considering $\alpha=2$ in Theorem \ref{th3}, we get 
\begin{eqnarray}
w^2_{A,e}(B,C) \leq\min\{ w_A^2(B-C), w_A^2(B+C)\}+\frac{ \|C^\sharp C+BB^\sharp\|_A+ 2w_A(BC)}{2}.\label{p8}\end{eqnarray}

Again, considering $B=C=T$ in   Theorem \ref{th3}, we get the following upper bound 
for the $A$-numerical radius of $T\in \mathbb{B}_A(\mathscr{H})$: 
\begin{eqnarray}
	 w_A^2(T)\leq\frac{\frac{1}{2}\max\{ 1, |1-\alpha|\}\|T^\sharp T+TT^\sharp\|_A+w_A(T^2)}{|\alpha|}.
	 \label{p80}\end{eqnarray}
Putting $\alpha=2$ in   \eqref{p80}, we get 
$$ w_A^2(T)\leq \frac{1}{4}\|T^\sharp T+TT^\sharp\|_A+\frac{1}{2}w_A(T^2),$$ 
which was also  obtained in \cite[Th. 2.11]{Z}.

%...............................................................................................

Next, in the following theorem we obtain a lower bound for $w_{A,e}(B,C)$.

\begin{theorem}
	If $ B,C\in\mathbb{B}_A(\mathscr{H})$, then 
	$$\frac{1}{2}\max\left\lbrace w_A^2(B+C)+c_A^2(B-C),w_A^2(B-C)+c_A^2(B+C)\right\rbrace\leq w_{A,e}^2(B,C).$$
	\label{th2}\end{theorem}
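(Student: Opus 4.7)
The plan is to exploit the parallelogram identity for complex scalars, namely $|a+b|^2+|a-b|^2=2(|a|^2+|b|^2)$, applied pointwise to $a=\langle Bx,x\rangle_A$ and $b=\langle Cx,x\rangle_A$. This rewrites the integrand in the definition of $w_{A,e}^2(B,C)$ as
\begin{equation*}
|\langle Bx,x\rangle_A|^2+|\langle Cx,x\rangle_A|^2=\tfrac{1}{2}\bigl(|\langle(B+C)x,x\rangle_A|^2+|\langle(B-C)x,x\rangle_A|^2\bigr),
\end{equation*}
valid for every $x\in\mathscr{H}$ (with $\|x\|_A=1$ when we take suprema).

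Next, I would use the elementary fact that for any real-valued functions $f,g$ on a set $\Omega$, $\sup_{\Omega}(f+g)\ge \sup_{\Omega}f+\inf_{\Omega}g$ (since for each fixed $x$, $f(x)+g(x)\ge f(x)+\inf g$). Applied with $f(x)=|\langle(B+C)x,x\rangle_A|^2$ and $g(x)=|\langle(B-C)x,x\rangle_A|^2$ over $\mathbb{S}_{\|\cdot\|_A}$, this yields
\begin{equation*}
w_{A,e}^2(B,C)\ge \tfrac{1}{2}\bigl(w_A^2(B+C)+c_A^2(B-C)\bigr),
\end{equation*}
directly from the definitions of $w_A$ and $c_A$ as supremum and infimum of $|\langle\cdot x,x\rangle_A|$ over the $A$-unit sphere. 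Swapping the roles of $f$ and $g$ in the same step gives the symmetric bound $\tfrac{1}{2}\bigl(w_A^2(B-C)+c_A^2(B+C)\bigr)$.

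Taking the maximum of the two lower bounds produces the claimed inequality. There is essentially no technical obstacle here: the proof is a two-line identity plus one use of the $\sup(f+g)\ge\sup f+\inf g$ principle. The only thing to be careful about is that the sup/inf must be taken over the \emph{same} set $\mathbb{S}_{\|\cdot\|_A}$ on which $w_A$ and $c_A$ are defined, which is automatic here. No appeal to Proposition~\ref{p10} or the $\widetilde{T}$ machinery is needed for this bound.
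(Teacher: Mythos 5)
Your proof is correct and follows essentially the same route as the paper: both start from the parallelogram identity $|\langle(B+C)x,x\rangle_A|^2+|\langle(B-C)x,x\rangle_A|^2=2\bigl(|\langle Bx,x\rangle_A|^2+|\langle Cx,x\rangle_A|^2\bigr)$ and then bound one term by $w_A^2$ and the other by $c_A^2$ over the $A$-unit sphere; your $\sup(f+g)\ge\sup f+\inf g$ step is just a compact restatement of the paper's pointwise rearrangement $|\langle(B+C)x,x\rangle_A|^2\le 2w_{A,e}^2(B,C)-c_A^2(B-C)$ followed by a supremum. No gap.
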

\begin{proof}
	Let $x \in \mathscr{H}$ with $\|x\|_A=1$. Then we have,  
	\begin{eqnarray*}|\left\langle Bx,x\right\rangle_A+\left\langle Cx,x\right\rangle_A|^2+|\left\langle Bx,x\right\rangle_A-\left\langle Cx,x\right\rangle_A|^2=2(|\left\langle Bx,x\right\rangle_A|^2+|\left\langle Cx,x\right\rangle_A|^2).\end{eqnarray*}
	This implies that 
	\begin{eqnarray*}|\left\langle (B+C)x,x\right\rangle_A|^2+|\left\langle (B-C)x,x\right\rangle_A|^2&=&2(|\left\langle Bx,x\right\rangle_A|^2+|\left\langle Cx,x\right\rangle_A|^2)\\&\leq& 2w_{A,e}^2(B,C).
	\end{eqnarray*}
	Thus,
	\begin{eqnarray*}
		|\left\langle (B+C)x,x\right\rangle_A|^2&\leq&2w_{A,e}^2(B,C)-|\left\langle (B-C)x,x\right\rangle_A|^2\\&\leq&2w_{A,e}^2(B,C)-c_A^2(B-C). 
	\end{eqnarray*}
	Taking supremum over all $x \in \mathscr{H}$ with $\|x\|_A=1$, we get $$ w_A^2(B+C)\leq2w_{A,e}^2(B,C)-c_A^2(B-C),$$ that is, \begin{eqnarray} \label{eq3}
		w_A^2(B+C)+c_A^2(B-C)\leq2w_{A,e}^2(B,C).
	\end{eqnarray}
	Similarly, 
	\begin{eqnarray}\label{eq4}
		w_A^2(B-C)+c_A^2(B+C)\leq2w_{A,e}^2(B,C).
	\end{eqnarray} 
	Combining the inequalities (\ref{eq3})  and (\ref{eq4}) we obtain   $$\frac{1}{2}\max\left\lbrace w_A^2(B+C)+c_A^2(B-C),w_A^2(B-C)+c_A^2(B+C)\right\rbrace\leq w_{A,e}^2(B,C),$$ as desired.
\end{proof}

Note that, for $A$-selfadjoint operators $B$ and $C $, the bound in Theorem \ref{th2} is of the form \begin{eqnarray}
	\frac{1}{2}\max\left\lbrace \|B+C\|_A^2+c_A^2(B-C),\|B-C\|_A^2+c_A^2(B+C)\right\rbrace\leq w_{A,e}^2(B,C).
\end{eqnarray}

%\begin{remark}
Also observe that the bound obtained in Theorem \ref{th2} is stronger then the first bound in \cite[Th. 2.7]{kf1}. Next inequality reads as follows:
%, namely, $\frac{1}{2}\max\left\lbrace w_A^2(B+C),w_A^2(B-C) \right\rbrace\leq w_{A,e}^2(B,C)$.

%(iii) Replacing $B$ by $\Re_A(T)$ and $C$ by $\Im_A(T)$ in  Theorem \ref{th2} we get the following lower bound for the numerical radius of $T\in \mathbb{B}_A(\mathscr{H})$:
%\begin{eqnarray*}
%&& w_A^2(T) \geq \\
%&& \frac{1}{2}\max\left\lbrace \|\Re_A(T)+\Im_A(T)\|_A^2+c_A^2(\Re_A(T)-\Im_A(T)),\|\Re_A(T)-\Im_A(T)\|_A^2+c_A^2(\Re_A(T)+\Im_A(T))\right\rbrace. 
%\end{eqnarray*}
%\end{remark}

\begin{theorem}\label{cor2}
	If $ B,C\in\mathbb{B}_A(\mathscr{H})$, then $$\max\left\lbrace w_A^2(B)+c_A^2(C),w_A^2(C)+c_A^2(B)\right\rbrace\leq w_{A,e}^2(B,C).$$
\end{theorem}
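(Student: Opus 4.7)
The plan is to mimic the strategy used for Theorem \ref{th2}, but now decouple the two terms individually using the Crawford number instead of bundling them through the parallelogram identity. The starting point is the defining inequality
\[
|\langle Bx,x\rangle_A|^2 + |\langle Cx,x\rangle_A|^2 \leq w_{A,e}^2(B,C)
\]
for every $x \in \mathscr{H}$ with $\|x\|_A = 1$, which follows directly from the definition of $w_{A,e}$ by taking supremum.

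First I would rearrange this as $|\langle Bx,x\rangle_A|^2 \leq w_{A,e}^2(B,C) - |\langle Cx,x\rangle_A|^2$, and then bound the right-hand side from above by replacing $|\langle Cx,x\rangle_A|^2$ with its infimum over the $A$-unit sphere, namely $c_A^2(C)$. This yields
\[
|\langle Bx,x\rangle_A|^2 \leq w_{A,e}^2(B,C) - c_A^2(C)
\]
for every $x$ with $\|x\|_A = 1$. Taking supremum over such $x$ on the left-hand side delivers
\[
w_A^2(B) + c_A^2(C) \leq w_{A,e}^2(B,C).
\]

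By symmetry, interchanging the roles of $B$ and $C$ in the argument above gives the companion inequality $w_A^2(C) + c_A^2(B) \leq w_{A,e}^2(B,C)$. Combining the two bounds by taking the maximum then yields the claimed estimate. There is no real obstacle here: the only subtlety to be careful about is that both terms in the bound must be controlled uniformly on the same unit vector $x$ before passing to sup/inf, so the order of operations (first isolate one term, then bound the other by $c_A$, then take sup) matters and must be respected.
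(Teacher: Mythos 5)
Your argument is correct: for every $x\in\mathbb{S}_{\|\cdot\|_A}$ one has $|\langle Cx,x\rangle_A|^2\geq c_A^2(C)$ by definition of the $A$-Crawford number, so isolating $|\langle Bx,x\rangle_A|^2\leq w_{A,e}^2(B,C)-|\langle Cx,x\rangle_A|^2\leq w_{A,e}^2(B,C)-c_A^2(C)$ and then taking the supremum over $x$ is legitimate, and the symmetric step plus taking the maximum finishes the proof. However, your route is genuinely different from the paper's. The paper does not reprove the estimate from scratch: it first records the parallelogram-type identity $w_{A,e}^2(B+C,B-C)=2\,w_{A,e}^2(B,C)$, and then substitutes $B\mapsto B+C$, $C\mapsto B-C$ into Theorem \ref{th2} (noting $(B+C)+(B-C)=2B$ and $(B+C)-(B-C)=2C$), which after cancelling the factor of $2$ yields exactly the claimed bound. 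What the paper's approach buys is economy — the result is exhibited as a formal consequence of Theorem \ref{th2}, making the relationship between the two lower bounds transparent. What your approach buys is self-containedness and directness: you apply the same ``isolate one term, bound the other by the Crawford number, then take the supremum'' device that underlies the proof of Theorem \ref{th2}, but directly to the pair $(B,C)$ rather than to $(B+C,B-C)$, so no identity relating the two Euclidean radii is needed. Your closing caution about the order of operations (fix $x$, bound one term uniformly by $c_A$, only then take the supremum of the other) is exactly the right point to flag, and you have respected it.
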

\begin{proof}
	Let $x\in \mathscr{H}$ with $\|x\|_A=1$. Then we have,
	\begin{eqnarray*}|\left\langle Bx,x\right\rangle_A+\left\langle Cx,x\right\rangle_A|^2+|\left\langle Bx,x\right\rangle_A-\left\langle Cx,x\right\rangle_A|^2=2(|\left\langle Bx,x\right\rangle_A|^2+|\left\langle Cx,x\right\rangle_A|^2),\end{eqnarray*}
	that is,
	\begin{eqnarray*}|\left\langle (B+C)x,x\right\rangle_A|^2+|\left\langle (B-C)x,x\right\rangle_A|^2=2(|\left\langle Bx,x\right\rangle_A|^2+|\left\langle Cx,x\right\rangle_A|^2).\end{eqnarray*}
	This implies that \begin{eqnarray}
		w_{A,e}^2(B+C,B-C)=2w_{A,e}^2(B,C).
		\label{eq5}\end{eqnarray} 
	Now, replacing $B$ by $B+C$ and $C$ by $B-C$ in Theorem \ref{th2}, we obtain
	\begin{eqnarray}
		2\max\left\lbrace w_A^2(B)+c_A^2(C),w_A^2(C)+c_A^2(B)\right\rbrace\leq w_{A,e}^2(B+C,B-C).
		\label{eq6}\end{eqnarray} 
	The desired inequality follows from (\ref{eq6}) together with the equality (\ref{eq5}).
	
\end{proof}

Finally, we obtain the following upper and lower bounds for $A$-Euclidean operator radius involving $A$-numerical radius.

\begin{theorem}\label{th4}
Let $ B,C\in\mathbb{B}(\mathscr{H})$, then 
\begin{eqnarray*}\label{pmax}
	w^2_A(\sqrt{\alpha}B\pm\sqrt{1-\alpha}C) \leq w_{A,e}^2(B,C) \leq  w_A^2(\sqrt{\alpha} B+\sqrt{1-\alpha}C)+w_A^2(\sqrt{1-\alpha} B+\sqrt{\alpha}C), 
	\end{eqnarray*}
 for all $\alpha\in [0,1].$
\end{theorem}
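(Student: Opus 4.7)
My plan is to reduce both inequalities to elementary identities in $\mathbb{C}^{2}$, applied pointwise to the scalars $a(x) := \langle Bx,x\rangle_A$ and $b(x) := \langle Cx,x\rangle_A$ for each $x$ in the $A$-unit sphere, and then take the supremum over such $x$.

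For the lower bound, fix $x \in \mathbb{S}_{\|\cdot\|_A}$ and set $a = a(x)$, $b = b(x)$. The Cauchy--Schwarz inequality in $\mathbb{C}^{2}$ applied to the pair $(\sqrt{\alpha},\sqrt{1-\alpha})$ and $(a,\pm b)$ gives
\begin{equation*}
|\langle(\sqrt{\alpha}B \pm \sqrt{1-\alpha}C)x,x\rangle_A|^{2}
= |\sqrt{\alpha}\,a \pm \sqrt{1-\alpha}\,b|^{2}
\le (\alpha+(1-\alpha))(|a|^{2}+|b|^{2})
= |a|^{2}+|b|^{2}.
\end{equation*}
Since the right-hand side is bounded above by $w_{A,e}^{2}(B,C)$, taking the supremum over $x$ yields $w_A^{2}(\sqrt{\alpha}B\pm\sqrt{1-\alpha}C) \le w_{A,e}^{2}(B,C)$.

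For the upper bound, the essential ingredient is the rotation identity: whenever $p,q \in \mathbb{R}$ with $p^{2}+q^{2}=1$ and $a,b \in \mathbb{C}$, a direct expansion in which the cross terms $\pm 2pq\,\mathrm{Re}(a\bar{b})$ cancel shows
\begin{equation*}
|pa+qb|^{2} + |qa-pb|^{2} = (p^{2}+q^{2})(|a|^{2}+|b|^{2}) = |a|^{2}+|b|^{2}.
\end{equation*}
Setting $p = \sqrt{\alpha}$, $q = \sqrt{1-\alpha}$ produces the pointwise splitting
\begin{equation*}
|a|^{2}+|b|^{2}
= \bigl|\langle(\sqrt{\alpha}B + \sqrt{1-\alpha}C)x,x\rangle_A\bigr|^{2}
+ \bigl|\langle(\sqrt{1-\alpha}B - \sqrt{\alpha}C)x,x\rangle_A\bigr|^{2}.
\end{equation*}
Taking the supremum over $x \in \mathbb{S}_{\|\cdot\|_A}$ and using the subadditivity of the supremum bounds each term by the $A$-numerical radius squared of the respective operator, producing the desired upper bound.

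The argument is essentially routine; the one conceptual point is recognizing that $(\sqrt{\alpha},\sqrt{1-\alpha})$ and $(\sqrt{1-\alpha},-\sqrt{\alpha})$ form an orthonormal basis of $\mathbb{R}^{2}$, so that Parseval's identity in $\mathbb{C}^{2}$ delivers the pointwise decomposition driving the upper bound, while the Cauchy--Schwarz inequality for the standard inner product on $\mathbb{R}^{2}$ delivers the lower bound. Consequently I anticipate no serious obstacle beyond careful bookkeeping of signs.
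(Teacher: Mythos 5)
Your proof is correct and follows essentially the same route as the paper: the Cauchy--Schwarz inequality in $\mathbb{C}^2$ for the lower bound, and the rotation identity $|pa+qb|^2+|qa-pb|^2=|a|^2+|b|^2$ for the upper bound. Note that, exactly as in the paper's own proof, what you actually establish is the upper bound with $w_A^2(\sqrt{1-\alpha}\,B-\sqrt{\alpha}\,C)$ (minus sign), so the plus sign in the second term of the theorem's stated upper bound appears to be a typo rather than a defect of your argument.
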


\begin{proof}
Let $x\in \mathscr{H}$ with $\|x\|_A=1$. Then  we have,
\begin{eqnarray*}
&& \sqrt{\alpha}|\langle Bx,x\rangle_A|+\sqrt{1-\alpha}|\langle Cx,x\rangle_A|\\
&& \leq(|\langle Bx,x\rangle_A|^2+|\langle Cx,x\rangle_A|^2)^{\frac{1}{2}}((\sqrt{\alpha})^2+(\sqrt{1-\alpha})^2)^{\frac{1}{2}}\\
  	&&=(|\langle Bx,x\rangle_A|^2+|\langle Cx,x\rangle_A|^2)^{\frac{1}{2}}.
\end{eqnarray*}
Therefore,
\begin{eqnarray*}
(|\langle Bx,x\rangle_A|^2+|\langle Cx,x\rangle_A|^2)^{\frac{1}{2}}&\geq&|\langle \sqrt{\alpha}Bx,x\rangle_A|+|\langle\sqrt{1-\alpha} Cx,x\rangle_A|\\&\geq&|\langle \sqrt{\alpha}Bx,x\rangle_A\pm\langle\sqrt{1-\alpha} Cx,x\rangle_A|\\&=&|\langle\left( \sqrt{\alpha}B\pm\sqrt{1-\alpha} C\right) x,x\rangle_A|.
\end{eqnarray*}
Taking supremum over all $x$ in $\mathscr{H}$ with $\|x\|_A=1$, we get the first inequality, i.e., $$ w_{A,e}(B,C)\geq w_A(\sqrt{\alpha}B\pm\sqrt{1-\alpha}C).$$ 
Next, we prove the second inequality. By simple calculation, we get
\begin{eqnarray*}
&&	|\langle Bx,x\rangle_A|^2+|\langle Cx,x\rangle_A|^2\\
&=&|\langle \sqrt{\alpha}Bx,x\rangle_A+\langle\sqrt{1-\alpha}Cx,x\rangle_A|^2+|\langle \sqrt{1-\alpha}Bx,x\rangle_A-\langle\sqrt{\alpha}Cx,x\rangle_A|^2\\&=&|\langle (\sqrt{\alpha}B+\sqrt{1-\alpha}C)x,x\rangle_A|^2+|\langle( \sqrt{1-\alpha}B-\sqrt{\alpha}C)x,x\rangle_A|^2\\&\leq& w_A^2(\sqrt{\alpha}B+\sqrt{1-\alpha}C)+w_A^2( \sqrt{1-\alpha}B-\sqrt{\alpha}C).
\end{eqnarray*}
Taking supremum over all $x$ in $\mathscr{H}$ with $\|x\|_A=1$, we get $$ w_{A,e}^2(B,C)\leq w_A^2(\sqrt{\alpha} B+\sqrt{1-\alpha}C)+w_A^2(\sqrt{1-\alpha} B-\sqrt{\alpha}C),$$ as desired.
\end{proof}

\begin{remark}
(i) It is easy to verify that 
\begin{eqnarray*}
	w^2_{A,e}(B,C) &\geq & \max_{0\leq \alpha\leq 1} w^2_A(\sqrt{\alpha}B\pm\sqrt{1-\alpha}C) \\
	&\geq & \frac12 \max w^2_A(B\pm C)\\
	&\geq & \frac12 w_A(B^2+C^2).
\end{eqnarray*} 	
	
(ii)  Putting $B=\Re_A(T)$ and $C=\Im_A(T)$ in (i) we obtain that 
\begin{eqnarray*}
	 w_A^2(T) &\geq& \frac12 \max \left\|\Re_A(T) \pm \Im_A(T) \right\|^2_A \\
	 &\geq & \frac14\| T^\sharp T+TT^\sharp\|_A.
\end{eqnarray*}
\end{remark}

	\noindent \textbf{Declarations.} \\
The authors have no competing interests to declare that are relevant to the content of this article.

\bibliographystyle{amsplain}

\end{document}